\theoremstyle{plain}
\newtheorem{definition}{Definition}
\newtheorem{example}{Example}
\newtheorem{lemma}{Lemma}
\newtheorem{proposition}{Proposition}
\newtheorem{theorem}{Theorem}
\numberwithin{equation}{section}
\begin{document}
\title{A bracket polynomial for graphs}
\author{L. Traldi}
\address{Lafayette College\\
Easton, Pennsylvania 18042}
\email{traldil@lafayette.edu}
\urladdr{http://www.lafayette.edu/\symbol{126}traldil/}
\author{L. Zulli}
\address{Lafayette College\\
Easton, Pennsylvania 18042}
\email{zullil@lafayette.edu}
\urladdr{http://www.cs.lafayette.edu/\symbol{126}zullil/}
\thanks{This paper is in final form and no version of it will be submitted for
publication elsewhere.}
\date{}
\subjclass{Primary 57M25, 05C50}
\keywords{graph, knot, Reidemeister move, Jones polynomial, trip matrix, interlace polynomial}

\begin{abstract}
A knot diagram has an associated \textit{looped interlacement graph}, obtained
from the intersection graph of the Gauss diagram by attaching loops to the
vertices that correspond to negative crossings. This construction suggests an
extension of the Kauffman bracket to an invariant of looped graphs, and an
extension of Reidemeister equivalence to an equivalence relation on looped
graphs. The graph bracket polynomial can be defined recursively using the same
pivot and local complementation operations used to define the interlace
polynomial, and it gives rise to a graph Jones polynomial $V_{G}(t)$ that is
invariant under the graph Reidemeister moves.

\end{abstract}
\maketitle

\section{Introduction}

Shortly after Jones introduced his polynomial invariant of knots in \cite{J},
Kauffman described the Jones polynomial in the following way \cite{K}. First,
a three-variable bracket polynomial for link diagrams is defined, using either
a state sum or a recursion. Then the variables that appear in the bracket
polynomial are evaluated so that the result is invariant under Reidemeister
moves of the second and third types. Finally, this simplified bracket
polynomial is multiplied by an appropriate factor so that the resulting
product is invariant under all three types of Reidemeister moves.
Thistlethwaite \cite{T} observed that the state sum and recursion used for
Kauffman's bracket imply that there is a strong connection between the bracket
polynomial of a link diagram and the Tutte polynomial of a graph associated
with a checkerboard coloring of the diagram's complementary regions.

\medskip

In this paper we present a fundamentally different graph-theoretic approach to
knot-theoretic ideas related to the Jones polynomial. We take a moment to
outline this approach before beginning a detailed presentation.

\medskip

First we introduce a new graph invariant, the \textit{graph bracket
polynomial}. If $G$ is an $n$-vertex graph then $[G]$ can be described in two
different ways. One description is a sum indexed by the $n\times n$ diagonal
matrices over $GF(2)$; this sum is a direct extension of a formula for the
Jones polynomial of a classical knot given in \cite{Z}. The other description
is a recursion involving the local complementation and pivot operations of
\cite{A1, A2, A, B}. The interlace polynomials of \cite{A1, A2, A} can also be
defined using either sums of $2^{n}$ terms involving $n\times n$ matrices over
$GF(2)$ or recursions involving the local complementation and pivoting
operations, but the graph bracket and interlace polynomials seem to be
genuinely different both in definition and in significance. For instance,
Sections 5 and 6 of \cite{A} suggest that the interlace polynomials capture
more information about the independent sets of a graph than the graph bracket
does; on the other hand the graph bracket captures more knot-theoretic
information than the interlace polynomials do (see \cite{BBRS}).

\medskip

Just as the Kauffman bracket $[D]$ of a knot diagram is related to the Tutte
polynomial through the checkerboard graph, $[D]$ is related to the graph
bracket polynomial through a different construction, a looped version of the
intersection graph of the Gauss diagram. The concise term \textit{Gauss graph}
is already in use, so we\ refer to this graph as the \textit{looped
interlacement graph}, $\mathcal{L}(D)$.

\begin{definition}
If $D$ is a plane diagram of a classical (or virtual) knot then the
\emph{looped interlacement graph} $\mathcal{L}(D)$ has a vertex for each
(classical) crossing in $D$. Two distinct vertices are adjacent in
$\mathcal{L}(D)$ if and only if the corresponding crossings are
\emph{interlaced} in $D $, i.e., while tracing $D$ (in either direction) one
encounters first one crossing, then the other, then the first again, and
finally the second again. $\mathcal{L}(D)$ has a loop at each vertex
corresponding to a negative crossing of $D$.
\end{definition}%

\begin{figure}
[h]
\begin{center}
\includegraphics[
height=0.7619in,
width=1.6414in
]%
{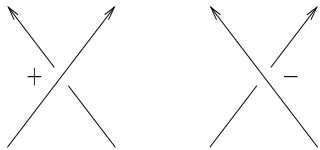}%
\caption{positive and negative crossings}%
\end{center}
\end{figure}

$\mathcal{L}(D)$ appeared implicitly in early research regarding the Jones
polynomial. Lannes \cite{L} showed that the Arf invariant of a classical knot
can be obtained from a combinatorial structure that is essentially a loopless
version of $\mathcal{L}(D)$; the relationship between the Arf invariant and
the evaluation $V_{K}(i)$ was deduced from this result in \cite{J}. Zulli
\cite{Z} showed that Kauffman's state sum formula for the Jones polynomial of
a classical knot can be obtained from the adjacency matrix of $\mathcal{L}(D)$
(the \textit{trip matrix} of $D$), but did not consider the graph
$\mathcal{L}(D)$; the Jones polynomial of a virtual knot can be obtained from
$\mathcal{L}(D)$ in the same way.

\medskip

In Section 6 we observe that through the looped interlacement graph
construction, the Reidemeister moves correspond to a family of simple
graph-theoretic operations. Consideration of small examples leads one to guess
that a graph Jones polynomial invariant under the graph Reidemeister moves can
be obtained from the graph bracket, just as for knots; this guess is verified
using the graph bracket's recursive description. As $\mathcal{L}(D)$ is
defined for virtual knot diagrams, these notions share with virtual knot
theory some striking differences from the classical case. A fundamental
difference is the fact that loop-attachment and -removal (the graph operations
that correspond to crossing switches) do not suffice to reduce an arbitrary
graph to a Reidemeister equivalent of an edgeless graph; consequently the
graph Jones polynomial cannot be computed simply by changing loops and
performing Reidemeister moves in order to produce \textquotedblleft
ungraphs.\textquotedblright

\medskip

We close this introduction with the observation that these results suggest
many questions. How is the graph bracket polynomial of a graph $G$ related to
the interlace polynomials of $G$? Are there graph invariants different from
the interlace and graph bracket polynomials that are determined by other
recursions involving the pivot and local complementation operations? Which
invariants of (classical or virtual) knots extend to Reidemeister equivalence
invariants of graphs? Is every graph Reidemeister equivalent to the looped
interlacement graph of some virtual knot diagram? Can these notions be
extended to links of more than one component?

\section{Defining the graph bracket polynomial}

Definition 1 simply specifies which vertex-pairs are to be adjacent in
$\mathcal{L}(D)$, and which vertices are to carry loops; multiple edges do not
occur in looped interlacement graphs, and modifying the definition to allow
multiple edges would not make looped interlacement graphs more useful. We
assume similarly that all the graphs we consider in this paper have no
multiple edges. The reader who is a stickler for generality may prefer to
consider arbitrary (multi-)graphs, with the understanding that every set of
multiple edges is to be grouped together and treated as a unit.

\begin{definition}
Let $G$ be a finite graph with vertex-set $V(G)=\{v_{1},...,v_{n}\}$. The
\emph{Boolean adjacency matrix} $\mathcal{A}(G)=(a_{ij})$ is the $n\times n$
matrix over $GF(2)$ whose $ij$ entry is 1 or 0 according to whether or not $G
$ has an edge $\{v_{i},v_{j}\}$.
\end{definition}

\begin{definition}
The \emph{graph bracket polynomial} of a finite graph $G$ is
\[
\lbrack G](A,B,d)=\sum_{\Delta}A^{\nu(\Delta)}B^{\rho(\Delta)}d^{\nu
(\mathcal{A}(G)+\Delta)}%
\]
with a summand for each $n\times n$ diagonal matrix $\Delta$ over $GF(2)$;
here $\nu$ and $\rho$ denote the nullity and rank of matrices over $GF(2)$, respectively.
\end{definition}

Observe that $[G]$ is an isomorphism invariant, i.e., it is not affected by
the ordering of $V(G)$. Note also that no information in $[G]$ is lost if we
replace $B$ by $A^{-1}$ or $1$;\ we include $B$ in Definition 3 only to agree
with Kauffman's original bracket notation \cite{K}.

\medskip

Definition 3 is motivated by the observation that if $D$ is a diagram of a
classical knot $K$ then $\mathcal{A}(\mathcal{L}(D))$ is the \emph{trip
matrix} of $D$ and hence Theorem 2 of \cite{Z} implies that the Kauffman
bracket $[D]$ equals the graph bracket $[\mathcal{L}(D)]$. It follows that if
$\mathcal{L}(D)$ has $n$ vertices and $\ell$ loops then the Jones polynomial
$V_{K}(t)$ is the image of $(-A^{3})^{\ell}(-B^{3})^{n-\ell}[\mathcal{L}(D)]$
under the evaluations $A\mapsto t^{-1/4}$, $B\mapsto t^{1/4} $ and $d$
$\mapsto-t^{-1/2}-t^{1/2}$. The computational intractability of $V_{K}(t)$
\cite{JVW} implies that calculating the graph bracket polynomial is also
intractable in general.

\medskip

Definition 3 bears some resemblance to the definition of the two-variable
interlace polynomial $q(G)$ \cite{A}:
\[
q(G)=\sum_{S\subseteq V(G)}(x-1)^{\rho(\mathcal{A}(G[S]))}(y-1)^{\nu
(\mathcal{A}(G[S]))}%
\]

\noindent where $G[S]$ is the subgraph of $G$ induced by $S$. Note that $q(G)
$ can be obtained from the sum
\[
\sum_{S\subseteq V(G)}[G[S]]
\]

\noindent by evaluating $A\mapsto x-1$, $B\mapsto0$, and $d\mapsto
(y-1)/(x-1)$; the evaluation $B\mapsto0$ eliminates most of the terms in the
sum, so this is certainly not an efficient way to obtain $q(G)$. There is an
even more inefficient way to obtain $[G]$ from a sum involving interlace
polynomials. For an $n\times n$ diagonal matrix $\Delta$ over $GF(2)$ let
$G+\Delta$ denote the graph obtained from $G$ by toggling the loops at the
vertices corresponding to nonzero entries of $\Delta$, so that $G+\Delta$ has
a loop at a vertex $v_{i}$ if and only if either $G$ has a loop at $v_{i}$ and
the $i^{th}$ entry of $\Delta$ is 0, or $G$ does not have a loop at $v_{i}$
and the $i^{th}$ entry of $\Delta$ is 1. $[G]$ can then be obtained from
\[
\sum_{\Delta}z^{n}A^{\nu(\Delta)}B^{\rho(\Delta)}q(G+\Delta)
\]

\noindent by first evaluating $x\mapsto z^{-1}+1$, $y\mapsto dz^{-1}+1$ and
then evaluating $z\mapsto0$.

\medskip

In addition to Definition 3, the graph bracket has a recursive definition
which involves the local complementation and pivoting operations used by
Bouchet \cite{B} and Arratia, Bollob\'{a}s and Sorkin \cite{A1, A2, A}.

\begin{definition}
(Local Complementation) Let $G$ be a finite graph. If $a$ is a vertex of $G$
then $G^{a}$ is obtained from $G$ by toggling\ adjacencies $\{x,y\}$ involving
neighbors of $a$ that are distinct from $a$. That is, if $x\neq a\neq y$ and
$x,y$ are neighbors of $a$ in $G$ then $G^{a}$ contains an edge $\{x,y\}$ if
and only if $G$ does not.
\end{definition}

Note that the definition allows for the possibility that $x=y$, in which case
the edge $\{x,y\}$ is a loop. We will not usually refer to the entire graph
$G^{a}$, but rather the subgraph $G^{a}-a$ obtained from $G^{a}$ by removing
$a$ and all edges incident on $a$.

\begin{definition}
(Pivot) Let $G$ be a finite graph with distinct vertices $a$ and $b$. Then the
graph $G^{ab}$ is obtained from $G$ by toggling\ adjacencies $\{x,y\}$ such
that $x,y\notin\{a,b\}$, $x$ is adjacent to $a$ in $G$, $y$ is adjacent to $b$
in $G$, and either $x$ is not adjacent to $b$ or $y$ is not adjacent to $a$.
That is, $G^{ab}$ contains such an edge $\{x,y\}$ if and only if $G$ does not.
\end{definition}

The recursive definition of the graph bracket is given in Theorem 1. As we
will see in\ Section 4 below, part (i) is an extension to the graph bracket of
the switching\ formula of the Kauffman bracket \cite{Kd} and the braid-plat
formula of the Jones polynomial \cite{BK}.

\begin{theorem}
(i) If $G$ is a finite graph with a loop at $a$ then
\[
\lbrack G]=A^{-1}B[G-\{a,a\}]+(A-A^{-1}B^{2})[G^{a}-a],
\]
where $G-\{a,a\}$ is obtained from $G$ by removing the loop at $a$.

\medskip

(ii) If $a$ and $b$ are distinct loopless neighbors in $G$ then
\[
\lbrack G]=A^{2}[G^{ab}-a-b]+AB[(G^{ab})^{a}-a-b]+B[G^{a}-a].
\]

(iii) The empty graph $E_{0}$ has $[E_{0}]=1$, and the edgeless graph $E_{n}$
with $n\geq1$ vertices has $[E_{n}]=(Ad+B)^{n}$.
\end{theorem}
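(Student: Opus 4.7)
My plan is to argue directly from the state-sum in Definition 3. Part (iii) is immediate: $[E_{0}]=1$ since only the empty $\Delta$ exists, and for $E_{n}$ we have $\mathcal{A}(E_{n})+\Delta=\Delta$, so indexing diagonal matrices by the number $k$ of $1$'s yields $[E_{n}]=\sum_{k=0}^{n}\binom{n}{k}A^{n-k}B^{k}d^{n-k}=(Ad+B)^{n}$.

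The engine of parts (i) and (ii) is Schur complementation over $GF(2)$: if $M$ is symmetric and its principal block $M_{SS}$ is invertible, then $\nu(M)=\nu(M_{\widehat{S}\widehat{S}}+M_{\widehat{S}S}M_{SS}^{-1}M_{S\widehat{S}})$. Applied to $M=\mathcal{A}(G)+\Delta$, writing $r_{a},r_{b}$ for the columns of $\mathcal{A}(G)$ at $a,b$ restricted to $V\setminus S$, I will verify three graph-theoretic identifications. \emph{(a)} If $S=\{a\}$ and the effective $(a,a)$-entry is $1$, then the correction $r_{a}r_{a}^{T}$ toggles the entries $(i,j)$ with both $i$ and $j$ neighbors of $a$, including loops on such neighbors; this matches Definition 4, so the Schur complement equals $\mathcal{A}(G^{a}-a)+\widetilde{\Delta}$. \emph{(b)} If $S=\{a,b\}$ and the effective $\{a,b\}$-block is $\left(\begin{smallmatrix}0&1\\1&0\end{smallmatrix}\right)$, then the block is self-inverse, and $r_{a}r_{b}^{T}+r_{b}r_{a}^{T}$ toggles $(i,j)$ exactly when $i,j$ lie in distinct classes among $N_{a}\setminus N_{b}$, $N_{b}\setminus N_{a}$, $N_{a}\cap N_{b}$ (with $N_{x}\subseteq V\setminus\{a,b\}$), which is the pivot rule of Definition 5; hence the Schur complement is $\mathcal{A}(G^{ab}-a-b)+\widetilde{\Delta}$. \emph{(c)} If $S=\{a,b\}$ and the effective block is $\left(\begin{smallmatrix}0&1\\1&1\end{smallmatrix}\right)$, then its $GF(2)$-inverse is $\left(\begin{smallmatrix}1&1\\1&0\end{smallmatrix}\right)$ and the correction is $r_{a}r_{a}^{T}+r_{a}r_{b}^{T}+r_{b}r_{a}^{T}$, which is the pivot correction of (b) plus the local complementation correction at $a$; the Schur complement is thus $\mathcal{A}((G^{ab})^{a}-a-b)+\widetilde{\Delta}$.

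Granted (a)--(c), part (i) follows by splitting $[G]$ by $\Delta_{a}$. Because $G$ has a loop at $a$, the effective $(a,a)$-entry equals $1$ exactly when $\Delta_{a}=0$, and (a) turns the $\Delta_{a}=0$ subsum into $A[G^{a}-a]$. Splitting $[G-\{a,a\}]$ the same way, (a) turns its $\Delta_{a}=1$ subsum into $B[G^{a}-a]$, while the remaining effective-zero-at-$a$ subsums of $[G]$ and $[G-\{a,a\}]$ share identical nullity data and differ only by the overall factor $B/A$ coming from $\Delta_{a}$'s contribution to $\rho(\Delta)$ and $\nu(\Delta)$. Solving the resulting $2\times 2$ linear relations yields $[G]=A^{-1}B[G-\{a,a\}]+(A-A^{-1}B^{2})[G^{a}-a]$.

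Part (ii) splits $[G]$ by $(\Delta_{a},\Delta_{b})\in\{0,1\}^{2}$. The two cases with $\Delta_{a}=1$ combine into $B[G^{a}-a]$ via (a): holding $\Delta_{a}=1$ and letting $(\Delta_{b},\widetilde{\Delta})$ range freely produces $B$ times the state-sum for $G^{a}-a$. Case $(0,0)$ yields $A^{2}[G^{ab}-a-b]$ via (b), and case $(0,1)$ yields $AB[(G^{ab})^{a}-a-b]$ via (c); summing gives the recursion. I expect the main obstacle to be identification (c): the asymmetry of the $GF(2)$-inverse of the effective block is what forces $(G^{ab})^{a}$ (rather than $(G^{ab})^{b}$) into the recursion, and confirming the corresponding adjacency-matrix identity requires carefully tracking how the pivot modifies the neighborhood of $a$ in $G^{ab}$ and how local complementation there acts on entries restricted to $V\setminus\{a,b\}$.
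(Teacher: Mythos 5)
Your proposal is correct and follows essentially the same route as the paper: both split the state sum of Definition 3 according to the diagonal entries of $\Delta$ at $a$ (and $b$), then use elimination on the invertible corner block --- your Schur complements are exactly the paper's explicit row and column operations --- to identify the reduced matrices with $\mathcal{A}(G^{a}-a)+\Delta'$, $\mathcal{A}(G^{ab}-a-b)+\Delta'$ and $\mathcal{A}((G^{ab})^{a}-a-b)+\Delta'$, tracking the shifts in $\nu(\Delta)$ and $\rho(\Delta)$ to get the coefficients $A$, $B$, $A^{2}$, $AB$. Your identifications (a)--(c) are sound (in particular, the pivot leaves the row $r_{a}$ unchanged off $\{a,b\}$, so the extra $r_{a}r_{a}^{T}$ term in (c) really is local complementation at $a$ applied to $G^{ab}$), and your bookkeeping for part (i) is an equivalent rearrangement of the paper's elimination of $\sum_{1}$.
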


If $G$ is an $n$-vertex graph then a recursive calculation of $[G]$ using
Theorem 1 yields a formula $[G]=\sum_{i=0}^{n}c_{i}[E_{i}]$ for some
coefficients $c_{0},...,c_{n}$ which are integer polynomials in $A$, $A^{-1}$
and $B$. These coefficients are\ uniquely determined: $c_{n}$ is the
coefficient of $d^{n}$ in $A^{-n}[G]$, $c_{n-1}$ is the coefficient of
$d^{n-1}$ in $A^{1-n}[G]-A^{1-n}c_{n}(Ad+B)^{n}$, and so on. It follows that
the graph bracket polynomial is \textquotedblleft universal\textquotedblright%
\ among graph invariants that satisfy parts (i) and (ii) of Theorem 1, i.e.,
any such graph invariant is determined by the graph bracket. For instance,
suppose $A,B,X_{0},X_{1},...$ are independent indeterminates, with $A$
invertible; then there is a graph invariant $[[G]]$ that satisfies parts (i)
and (ii) of Theorem 1 and has $[[E_{n}]]=X_{n}$ for every $n$. One might
expect this invariant to be more sensitive than the graph bracket, because it
involves infinitely many indeterminates. Instead it contains precisely the
same information as the graph bracket does: for every graph $G$, $[[G]]=$
$\sum_{i=0}^{\left\vert V(G)\right\vert }c_{i}X_{i} $ and $[G]=\sum
_{i=0}^{\left\vert V(G)\right\vert }c_{i}[E_{i}]$, with the same coefficients
$c_{i}$.

\medskip

We proceed to prove Theorem 1. Observe first that $[G]$ is unchanged if we
permute the vertices of $G$, so we may always presume that the vertices of $G
$ are ordered in a convenient way.

\medskip

Suppose $G$ has a loop at $a=v_{1}$, and let%

\[
\sum_{0}=\sum_{\Delta=(\delta_{ij})~with~\delta_{11}=0}A^{\nu(\Delta)}%
B^{\rho(\Delta)}d^{\nu(\mathcal{A}(G)+\Delta)}%
\]

\medskip%

\[
\mathrm{and\quad}\sum_{1}=\sum_{\Delta=(\delta_{ij})~with~\delta_{11}=1}%
A^{\nu(\Delta)}B^{\rho(\Delta)}d^{\nu(\mathcal{A}(G)+\Delta)}.
\]

\medskip

\noindent Then $[G]=\sum_{0}+\sum_{1}$ and $[G-\{a,a\}]=A^{-1}B\sum
_{0}+AB^{-1}\sum_{1}$, so
\[
\lbrack G]=A^{-1}B[G-\{a,a\}]+(1-A^{-2}B^{2})\sum_{0}.
\]

Suppose $\Delta=(\delta_{ij})$ is a diagonal matrix with $\delta_{11}=0$.
Then
\[
\mathcal{A}(G)+\Delta=
\begin{pmatrix}
1 & \mathbf{1} & \mathbf{0}\\
\mathbf{1} & M_{11} & M_{12}\\
\mathbf{0} & M_{21} & M_{22}%
\end{pmatrix}
,
\]

\noindent where bold characters indicate row and column vectors with all
entries the same. If $M_{11}^{c}$ denotes the matrix obtained by changing
every entry in $M_{11}$ then
\[
\nu(\mathcal{A}(G)+\Delta)=\nu%
\begin{pmatrix}
M_{11}^{c} & M_{12}\\
M_{21} & M_{22}%
\end{pmatrix}
=\nu(\mathcal{A}(G^{a}-a)+\Delta^{\prime}),
\]
where $\Delta^{\prime}$ is the submatrix of $\Delta$ obtained by removing the
first row and column. Consequently
\begin{align*}
\sum_{0}  & =\sum_{n\times n~\Delta~with~\delta_{11}=0}A^{\nu(\Delta)}%
B^{\rho(\Delta)}d^{\nu(\mathcal{A}(G)+\Delta)}\\
& =\sum_{(n-1)\times(n-1)~\Delta^{\prime}}A^{1+\nu(\Delta^{\prime})}%
B^{\rho(\Delta^{\prime})}d^{\nu(\mathcal{A}(G^{a}-a)+\Delta^{\prime})}%
=A[G^{a}-a].\hspace{0.7in}%
\end{align*}

This completes the proof of part (i) of Theorem 1. Suppose $a=v_{1}$ and
$b=v_{2}$ are adjacent in $G$, and $G$ does not have a loop at $a$ or $b$. For
$\beta\in\{0,1\}$ let
\[
\sum_{0,\beta}=\sum_{\substack{\Delta=(\delta_{ij})~with \\\delta
_{11}=0\,and\,\delta_{22}=\beta}}A^{\nu(\Delta)}B^{\rho(\Delta)}%
d^{\nu(\mathcal{A}(G)+\Delta)}.
\]

Let $\Delta$ be a diagonal matrix with $\delta_{11}=0=\delta_{22}$. Then
\[
\mathcal{A}(G)+\Delta=\left(
\begin{array}
[c]{cccccc}%
0 & 1 & \mathbf{1} & \mathbf{1} & \mathbf{0} & \mathbf{0}\\
1 & 0 & \mathbf{1} & \mathbf{0} & \mathbf{1} & \mathbf{0}\\
\mathbf{1} & \mathbf{1} & M_{11} & M_{12} & M_{13} & M_{14}\\
\mathbf{1} & \mathbf{0} & M_{21} & M_{22} & M_{23} & M_{24}\\
\mathbf{0} & \mathbf{1} & M_{31} & M_{32} & M_{33} & M_{34}\\
\mathbf{0} & \mathbf{0} & M_{41} & M_{42} & M_{43} & M_{44}%
\end{array}
\right)  .
\]
Using row and column operations to eliminate the $\mathbf{1}$ vectors, we see
that the nullity of $\mathcal{A}(G)+\Delta$ is the same as that of
\[
\left(
\begin{array}
[c]{cccccc}%
0 & 1 & \mathbf{0} & \mathbf{0} & \mathbf{0} & \mathbf{0}\\
1 & 0 & \mathbf{1} & \mathbf{0} & \mathbf{1} & \mathbf{0}\\
\mathbf{1} & \mathbf{0} & M_{11}^{c} & M_{12}^{c} & M_{13} & M_{14}\\
\mathbf{1} & \mathbf{0} & M_{21} & M_{22} & M_{23} & M_{24}\\
\mathbf{0} & \mathbf{0} & M_{31}^{c} & M_{32}^{c} & M_{33} & M_{34}\\
\mathbf{0} & \mathbf{0} & M_{41} & M_{42} & M_{43} & M_{44}%
\end{array}
\right)  \,\mathrm{and}\,\left(
\begin{array}
[c]{cccccc}%
0 & 1 & \mathbf{0} & \mathbf{0} & \mathbf{0} & \mathbf{0}\\
1 & 0 & \mathbf{0} & \mathbf{0} & \mathbf{0} & \mathbf{0}\\
\mathbf{0} & \mathbf{0} & M_{11} & M_{12}^{c} & M_{13}^{c} & M_{14}\\
\mathbf{0} & \mathbf{0} & M_{21}^{c} & M_{22} & M_{23}^{c} & M_{24}\\
\mathbf{0} & \mathbf{0} & M_{31}^{c} & M_{32}^{c} & M_{33} & M_{34}\\
\mathbf{0} & \mathbf{0} & M_{41} & M_{42} & M_{43} & M_{44}%
\end{array}
\right)  .
\]
Observe that
\[
\left(
\begin{array}
[c]{cccc}%
M_{11} & M_{12}^{c} & M_{13}^{c} & M_{14}\\
M_{21}^{c} & M_{22} & M_{23}^{c} & M_{24}\\
M_{31}^{c} & M_{32}^{c} & M_{33} & M_{34}\\
M_{41} & M_{42} & M_{43} & M_{44}%
\end{array}
\right)  =\mathcal{A}(G^{ab}-a-b)+\Delta^{\prime},
\]
where $\Delta^{\prime}$ is obtained from $\Delta$ by removing the first two
rows and columns. As $\rho(\Delta)=\rho(\Delta^{\prime})$ and $\nu(\Delta
)=\nu(\Delta^{\prime})+2$, summing over all such $\Delta$ tells us that
\[
\sum_{0,0}=A^{2}[G^{ab}-a-b].
\]

If $\Delta$ is a diagonal matrix with $\delta_{11}=0\neq\delta_{22}$ then the
nullity of
\[
\mathcal{A}(G)+\Delta=\left(
\begin{array}
[c]{cccccc}%
0 & 1 & \mathbf{1} & \mathbf{1} & \mathbf{0} & \mathbf{0}\\
1 & 1 & \mathbf{1} & \mathbf{0} & \mathbf{1} & \mathbf{0}\\
\mathbf{1} & \mathbf{1} & M_{11} & M_{12} & M_{13} & M_{14}\\
\mathbf{1} & \mathbf{0} & M_{21} & M_{22} & M_{23} & M_{24}\\
\mathbf{0} & \mathbf{1} & M_{31} & M_{32} & M_{33} & M_{34}\\
\mathbf{0} & \mathbf{0} & M_{41} & M_{42} & M_{43} & M_{44}%
\end{array}
\right)
\]
is the same as the nullity of
\[
\left(
\begin{array}
[c]{cccccc}%
0 & 1 & \mathbf{0} & \mathbf{0} & \mathbf{0} & \mathbf{0}\\
1 & 0 & \mathbf{0} & \mathbf{1} & \mathbf{1} & \mathbf{0}\\
\mathbf{1} & \mathbf{0} & M_{11}^{c} & M_{12}^{c} & M_{13} & M_{14}\\
\mathbf{1} & \mathbf{0} & M_{21} & M_{22} & M_{23} & M_{24}\\
\mathbf{0} & \mathbf{0} & M_{31}^{c} & M_{32}^{c} & M_{33} & M_{34}\\
\mathbf{0} & \mathbf{0} & M_{41} & M_{42} & M_{43} & M_{44}%
\end{array}
\right)  \,\mathrm{and}\,\left(
\begin{array}
[c]{cccccc}%
0 & 1 & \mathbf{0} & \mathbf{0} & \mathbf{0} & \mathbf{0}\\
1 & 0 & \mathbf{0} & \mathbf{0} & \mathbf{0} & \mathbf{0}\\
\mathbf{0} & \mathbf{0} & M_{11}^{c} & M_{12} & M_{13}^{c} & M_{14}\\
\mathbf{0} & \mathbf{0} & M_{21} & M_{22}^{c} & M_{23}^{c} & M_{24}\\
\mathbf{0} & \mathbf{0} & M_{31}^{c} & M_{32}^{c} & M_{33} & M_{34}\\
\mathbf{0} & \mathbf{0} & M_{41} & M_{42} & M_{43} & M_{44}%
\end{array}
\right)  .
\]
If $\Delta^{\prime}$ is obtained from $\Delta$ by removing the first two rows
and columns then
\[
\left(
\begin{array}
[c]{cccc}%
M_{11}^{c} & M_{12} & M_{13}^{c} & M_{14}\\
M_{21} & M_{22}^{c} & M_{23}^{c} & M_{24}\\
M_{31}^{c} & M_{32}^{c} & M_{33} & M_{34}\\
M_{41} & M_{42} & M_{43} & M_{44}%
\end{array}
\right)  =\mathcal{A}((G^{ab})^{a}-a-b)+\Delta^{\prime}.
\]
As $\rho(\Delta)=\rho(\Delta^{\prime})+1$ and $\nu(\Delta)=\nu(\Delta^{\prime
})+1$, summing over all such $\Delta$ shows that
\[
\sum_{0,1}=AB[(G^{ab})^{a}-a-b].
\]

Let
\[
\sum_{1}=\sum_{\Delta=(\delta_{ij})~with~\delta_{11}=1}A^{\nu(\Delta)}%
B^{\rho(\Delta)}d^{\nu(\mathcal{A}(G)+\Delta)}.
\]
If $\Delta$ is a diagonal matrix with $\delta_{11}=1$ then the nullity of
\[
\mathcal{A}(G)+\Delta=
\begin{pmatrix}
1 & \mathbf{1} & \mathbf{0}\\
\mathbf{1} & M_{11} & M_{12}\\
\mathbf{0} & M_{21} & M_{22}%
\end{pmatrix}
\]
is the same as the nullity of
\[%
\begin{pmatrix}
M_{11}^{c} & M_{12}\\
M_{21} & M_{22}%
\end{pmatrix}
=\mathcal{A}(G^{a}-a)+\Delta^{\prime},
\]
where $\Delta^{\prime}$ is obtained from $\Delta$ by removing the first row
and column. As $\rho(\Delta)=\rho(\Delta^{\prime})+1$ and $\nu(\Delta
)=\nu(\Delta^{\prime})$, summing over all such $\Delta$ tells us that
\[
\sum_{1}=B[G^{a}-a].
\]

The above formulas for $\sum_{0,0}$, $\sum_{0,1}$ and $\sum_{1}$ imply part
(ii) of Theorem 1.

$\medskip$

Part (iii) follows immediately from Definition 3.$\qquad\square$

\section{Some properties of the graph bracket}

\begin{proposition}
If $G$ is the union of disjoint subgraphs $G_{1}$ and $G_{2}$ then
$[G]=[G_{1}]\cdot\lbrack G_{2}].$
\end{proposition}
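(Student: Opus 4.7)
The plan is to prove this directly from Definition 3, exploiting the block-diagonal structure that disjoint union induces on Boolean adjacency matrices. If we order $V(G)$ so that the vertices of $G_1$ come before those of $G_2$, then $\mathcal{A}(G)$ is the block-diagonal matrix with diagonal blocks $\mathcal{A}(G_1)$ and $\mathcal{A}(G_2)$, since there are no edges between $G_1$ and $G_2$. This ordering is harmless because $[G]$ is an isomorphism invariant.

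Next, observe that every $n \times n$ diagonal matrix $\Delta$ over $GF(2)$ (where $n = |V(G_1)| + |V(G_2)|$) decomposes uniquely as $\Delta = \Delta_1 \oplus \Delta_2$, where $\Delta_i$ is an $|V(G_i)| \times |V(G_i)|$ diagonal matrix, and this decomposition is a bijection between the index set of the sum defining $[G]$ and the product of the index sets defining $[G_1]$ and $[G_2]$. For such $\Delta$, the matrix $\mathcal{A}(G) + \Delta$ is again block-diagonal, with blocks $\mathcal{A}(G_1) + \Delta_1$ and $\mathcal{A}(G_2) + \Delta_2$.

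The key numerical fact is that nullity and rank are additive over block-diagonal matrices over $GF(2)$:
\[
\nu(\mathcal{A}(G) + \Delta) = \nu(\mathcal{A}(G_1) + \Delta_1) + \nu(\mathcal{A}(G_2) + \Delta_2),
\]
and similarly $\nu(\Delta) = \nu(\Delta_1) + \nu(\Delta_2)$ and $\rho(\Delta) = \rho(\Delta_1) + \rho(\Delta_2)$. Substituting these identities into the summand $A^{\nu(\Delta)} B^{\rho(\Delta)} d^{\nu(\mathcal{A}(G)+\Delta)}$ turns it into a product of the corresponding summands for $G_1$ and $G_2$. Summing over all $\Delta_1$ and $\Delta_2$ independently then factors the sum, giving $[G] = [G_1] \cdot [G_2]$.

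There is no real obstacle here; the only thing to be careful about is the bookkeeping of indices and the verification that nullity truly is additive on a block-diagonal matrix, which follows immediately from the fact that the kernel of such a matrix is the direct sum of the kernels of its blocks.
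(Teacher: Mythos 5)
Your proof is correct and is exactly the argument the paper gives, only written out in more detail: the paper's one-line proof likewise rests on the block-diagonal decomposition of $\mathcal{A}(G)+\Delta$ into $\mathcal{A}(G_1)+\Delta_1$ and $\mathcal{A}(G_2)+\Delta_2$, with the additivity of rank and nullity and the resulting factorization of the sum left implicit. Nothing further is needed.
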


\begin{proof}
Each matrix $\mathcal{A}(G)+\Delta$ which appears in Definition 3 consists of
two diagonal blocks $\mathcal{A}(G_{1})+\Delta_{1}$ and $\mathcal{A}%
(G_{2})+\Delta_{2}.$
\end{proof}

\begin{proposition}
Let $G$ be a graph and let $G+I$ denote the graph obtained by toggling the
loops at the vertices of $G$ (i.e., $G+I$ has loops at precisely those
vertices where $G$ does not). Then $[G+I](A,B,d)=[G](B,A,d).$
\end{proposition}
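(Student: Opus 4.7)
The plan is to prove this by a direct change-of-summation-index in Definition 3, exploiting the fact that toggling all loops of $G$ amounts to adding the identity matrix to $\mathcal{A}(G)$.

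First I would observe that $\mathcal{A}(G+I) = \mathcal{A}(G) + I_{n}$, where $I_{n}$ is the $n\times n$ identity matrix over $GF(2)$, since toggling the loop at $v_{i}$ exactly flips the $(i,i)$ entry of the Boolean adjacency matrix. Consequently, for any diagonal matrix $\Delta$ we have
\[
\mathcal{A}(G+I) + \Delta = \mathcal{A}(G) + (I_{n}+\Delta).
\]
The substitution $\Delta' = I_{n}+\Delta$ is an involution on the set of $n\times n$ diagonal matrices over $GF(2)$, so summing over $\Delta$ is the same as summing over $\Delta'$.

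Next I would compute how the substitution affects the exponents of $A$ and $B$. Since $\Delta'$ has a $1$ in position $(i,i)$ precisely where $\Delta$ has a $0$, the numbers of zero and nonzero diagonal entries are swapped: $\nu(\Delta)=\rho(\Delta')$ and $\rho(\Delta)=\nu(\Delta')$. Substituting into Definition 3 gives
\[
[G+I](A,B,d) = \sum_{\Delta'} A^{\rho(\Delta')} B^{\nu(\Delta')} d^{\nu(\mathcal{A}(G)+\Delta')},
\]
which is exactly $[G](B,A,d)$.

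There is no real obstacle here; the only point requiring a moment's care is verifying that the diagonal of $\mathcal{A}(G)$ (and hence the rank/nullity identities for $\Delta$) interacts correctly with the sum, but this is automatic because $I_{n}+\Delta$ is itself diagonal and the map $\Delta\mapsto I_{n}+\Delta$ is a bijection on diagonal matrices. So the whole argument is a one-line reindexing once the identity $\mathcal{A}(G+I)=\mathcal{A}(G)+I_{n}$ is recorded.
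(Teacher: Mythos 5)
Your proof is correct and follows essentially the same route as the paper: both arguments rest on the identity $\mathcal{A}(G+I)=\mathcal{A}(G)+I$ together with the reindexing $\Delta\mapsto I+\Delta$, which swaps rank and nullity for diagonal matrices over $GF(2)$. No issues.
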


\begin{proof}
Let $I$ be the $n\times n$ identity matrix. Then
\begin{align*}
\lbrack G+I](A,B,d)  & =\sum_{\Delta}A^{\nu(\Delta)}B^{\rho(\Delta)}%
d^{\nu(\mathcal{A}(G+I)+\Delta)}\\
& =\sum_{\Delta}A^{\rho(I+\Delta)}B^{\nu(I+\Delta)}d^{\nu(\mathcal{A}%
(G)+I+\Delta)}=[G](B,A,d).
\end{align*}

\noindent This argument is essentially the same as the proof of Proposition 3
of \cite{SZ}.
\end{proof}

Two simplifications of the graph bracket are defined just as for knots.

\begin{definition}
The reduced graph bracket of $G$ is $\left\langle G\right\rangle
(A)=[G](A,A^{-1},-A^{2}-A^{-2})$. If $G$ has $n$ vertices and $\ell$ loops
then the graph Jones polynomial of $G$ is $V_{G}(t)=(-1)^{n}\cdot
t^{(3n-6\ell)/4}\cdot\left\langle G\right\rangle (t^{-1/4})$.
\end{definition}

Some of the properties of Jones polynomials of classical knots that were given
in \cite{J} do not extend to the Jones polynomials of arbitrary graphs. For
instance $V_{K}(t)$ is always a Laurent polynomial in $t$, but\ odd powers of
$t^{1/2}$ may appear in $V_{G}(t)$; e.g., the 2-vertex path $P_{2}$ (a
non-loop edge with its two end-vertices) has $V_{P_{2}}(t)=t^{3/2}%
(t^{-1/2}+1-t)$. (N.b. Definition 3 implies that $\left\langle G\right\rangle
(A)$ involves only powers $A^{k}$ with $k\equiv n~(\operatorname{mod}2)$, so
$V_{G}(t)$ is always a Laurent polynomial in $t^{1/2}$.) It follows that
results about specific values of $V_{K}(t)$ cannot be unambiguously
generalized to $V_{G}(t)$. For example, classical knots have $V_{K}(e^{2\pi
i/3})=1=\pm V_{K}(i)$, but $V_{P_{2}}(e^{2\pi i/3})$ is $-2+i\sqrt{3}$ or $1$
according to the choice of $(e^{2\pi i/3})^{1/2}$, and similarly $V_{P_{2}%
}(i)$ is $i(1\pm\sqrt{2})$. Every classical knot diagram can be changed into a
diagram of the trivial knot by reversing some crossings, and consequently if
$\mathcal{L}(D)$ is the looped interlacement graph of a classical knot diagram
then there is a graph $G$ that differs from $\mathcal{L}(D)$ only with regard
to loops and has $V_{G}(t)=1$. This is another property that does not extend
to the Jones polynomials of arbitrary graphs; for instance, it is not possible
to obtain a graph $G$ with $V_{G}(t)=1$ by adjoining loops to the 6-vertex
path $P_{6}$.

\medskip

By the way, both $P_{2}$ and $P_{6}$ are looped interlacement graphs of
virtual knots, so the \textquotedblleft misbehavior\textquotedblright%
\ mentioned in the preceding paragraph occurs when the Jones polynomial is
extended from classical knots to virtual knots. We do not know if there is a
graph whose Jones polynomial is not the Jones polynomial of some virtual knot.

\medskip

Theorems 15 and 16 of \cite{J}, on the other hand, do extend to arbitrary graphs.

\begin{proposition}
If we use $1$ for $1^{-1/4}$ then every graph has $V_{G}(1)=1$ and
$V_{G}^{\prime}(1)=0$.
\end{proposition}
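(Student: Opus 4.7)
The plan is to translate the two assertions into identities for the reduced bracket $\langle G\rangle(A)$ at $A=1$, and then prove those by induction using Theorem 1. Setting $A=t^{-1/4}$ in Definition 6 gives $V_G(t)=(-1)^{n}A^{6\ell-3n}\langle G\rangle(A)$, so the chain rule yields
\[
V_G(1)=(-1)^{n}\langle G\rangle(1)\qquad\text{and}\qquad V_G'(1)=-\tfrac14(-1)^{n}\bigl[(6\ell-3n)\langle G\rangle(1)+\langle G\rangle'(1)\bigr].
\]
Hence it suffices to prove the two auxiliary identities
\[
\langle G\rangle(1)=(-1)^{n}\qquad\text{and}\qquad \langle G\rangle'(1)=(3n-6\ell)(-1)^{n},
\]
where $n=|V(G)|$ and $\ell$ is the number of loops of $G$.

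I would prove both identities simultaneously by induction on the pair $(n,\ell)$ in lexicographic order, using the three parts of Theorem 1. The base case $G=E_n$ is immediate: part (iii) gives $\langle E_n\rangle(A)=(-A^3)^{n}$, whose value and derivative at $A=1$ are $(-1)^{n}$ and $3n(-1)^{n}$, matching the desired formulas since $E_n$ is loopless.

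For the inductive step, I use part (i) when $G$ has a loop and part (ii) otherwise. Specializing $B=A^{-1}$ in part (i) gives
\[
\langle G\rangle(A)=A^{-2}\langle G-\{a,a\}\rangle(A)+(A-A^{-3})\langle G^{a}-a\rangle(A).
\]
The coefficient $A-A^{-3}$ vanishes at $A=1$, so $\langle G\rangle(1)=\langle G-\{a,a\}\rangle(1)=(-1)^{n}$ by induction (since $G-\{a,a\}$ has the same vertex count and $\ell-1$ loops). Differentiating once and evaluating at $A=1$ reduces to a short arithmetic check in which the contributions $-2$ from $(A^{-2})'|_{A=1}$ and $+4$ from $(A-A^{-3})'|_{A=1}$ combine with the inductive values for $G-\{a,a\}$ and $G^{a}-a$ to give exactly $(3n-6\ell)(-1)^{n}$.

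The remaining case is where the main work lies. Specializing $B=A^{-1}$ in part (ii) gives
\[
\langle G\rangle(A)=A^{2}\langle G^{ab}-a-b\rangle(A)+\langle(G^{ab})^{a}-a-b\rangle(A)+A^{-1}\langle G^{a}-a\rangle(A).
\]
At $A=1$ the three inductive values $(-1)^{n-2}$, $(-1)^{n-2}$, $(-1)^{n-1}$ add to $(-1)^{n}$, so the first identity is immediate. The derivative identity is the delicate point, because the inductive expression for $\langle G\rangle'(1)$ depends on the loop counts of the three descendant graphs, and these must be tracked through the pivot and local complementation operations. Definition 5 shows that the pivot cannot create a loop (the condition $x=y$ combined with $x\sim a$ and $x\sim b$ contradicts the eligibility clause), so $G^{ab}$ is loopless whenever $G$ is; Definition 4 applied to a loopless graph puts a loop at every vertex $x\neq a$ with $x\sim a$. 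Since the pivot leaves all edges at $a$ untouched, $N_{G^{ab}}(a)=N_{G}(a)$, giving $\ell(G^{ab}-a-b)=0$, $\ell((G^{ab})^{a}-a-b)=|N_{G}(a)|-1$, and $\ell(G^{a}-a)=|N_{G}(a)|$. The resulting relation $\ell(G^{a}-a)=1+\ell(G^{ab}-a-b)+\ell((G^{ab})^{a}-a-b)$ is exactly what is needed for the inductive derivative formulas to combine with the terms from differentiating the prefactors $A^{2}$, $1$, $A^{-1}$ and produce the desired value $3n(-1)^{n}=(3n-6\ell)(-1)^{n}$.
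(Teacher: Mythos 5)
Your proposal is correct and follows essentially the same route as the paper: it reduces both claims to the identities $\langle G\rangle(1)=(-1)^{n}$ and $\frac{d\langle G\rangle}{dA}(1)=(-1)^{n}(3n-6\ell)$ and proves them by induction via Theorem 1, with the same loop-count bookkeeping ($G^{ab}-a-b$ loopless, $(G^{ab})^{a}-a-b$ with $\deg(a)-1$ loops, $G^{a}-a$ with $\deg(a)$ loops) that the paper carries out in its Propositions 3 and 4.
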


\begin{proof}
The equality $V_{G}(1)=1$ is equivalent to $\left\langle G\right\rangle
(1)=(-1)^{n}$. This latter is certainly true for $\left\langle E_{n}%
\right\rangle =(-A^{3})^{n}$, and Theorem 1 directly provides a general
inductive proof.

Note that
\begin{align*}
V_{G}^{\prime}(1)  & =(-1)^{n}\left(  \frac{3n-6\ell}{4}\right)
\cdot\left\langle G\right\rangle (1)+(-1)^{n}\frac{d}{dt}(\left\langle
G\right\rangle (t^{-1/4}))(1)\\
& =\left(  \frac{3n-6\ell}{4}\right)  +(-1)^{n}\frac{d}{dt}(\left\langle
G\right\rangle (t^{-1/4}))(1).
\end{align*}
Consequently $V_{G}^{\prime}(1)=0$ if $\frac{d}{dt}(\left\langle
G\right\rangle (t^{-1/4}))(1)=(-1)^{n}\left(  \frac{6\ell-3n}{4}\right)  $;
the equivalent formula $\frac{d\left\langle G\right\rangle }{dA}%
(1)=(-1)^{n}(3n-6\ell)$ is verified in Proposition 4 below.
\end{proof}

Note that Proposition 3 and Theorem 15 of \cite{J} together imply that the
graph bracket (resp. the Jones polynomial) of a graph cannot equal the
Kauffman bracket (resp. the Jones polynomial) of a link of two or more components.

\begin{proposition}
The graph bracket polynomial of $G$ determines\ both the number $n$ of
vertices in $G$ and the number $\ell$ of loops in $G$:
\[
n=\log_{2}\left(  [G](1,1,1)\right)  \qquad and\qquad\ell=\frac{n}{2}-\left(
\frac{(-1)^{n}}{6}\right)  \cdot\frac{d\left\langle G\right\rangle }{dA}(1).
\]

\end{proposition}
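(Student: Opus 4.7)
The plan is to handle the two claims separately. The first equation, $n = \log_2\bigl([G](1,1,1)\bigr)$, falls out of Definition 3 directly: at $A=B=d=1$ every summand equals $1$, and there are exactly $2^n$ diagonal $n \times n$ matrices over $GF(2)$, so $[G](1,1,1)=2^n$. For the loop-count formula, the proof of Proposition 3 has already reduced the claim to the identity
\[
\frac{d\langle G\rangle}{dA}(1) = (-1)^n(3n-6\ell),
\]
which is algebraically equivalent to the stated expression for $\ell$. I would prove this identity by induction on $|V(G)|$, with a secondary induction on $\ell(G)$ for fixed $n$. The base case $G = E_n$ uses Theorem 1(iii) and the substitutions $B=A^{-1}$, $d = -A^2-A^{-2}$ to give $\langle E_n\rangle(A) = (-A^3)^n$, whose derivative at $A=1$ equals $3n(-1)^n = (-1)^n(3n - 0)$.

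For the inductive step I would consider two cases. If $G$ has a loop at $a$, apply Theorem 1(i) under the reduced substitutions to get
\[
\langle G\rangle(A) = A^{-2}\langle G-\{a,a\}\rangle(A) + (A-A^{-3})\langle G^a - a\rangle(A).
\]
Differentiating at $A=1$, the factor $A - A^{-3}$ vanishes there, so the $\langle G^a-a\rangle'(1)$ term drops out. Proposition 3 supplies the other values at $A=1$, and the secondary inductive hypothesis applied to $G-\{a,a\}$ (same $n$, one fewer loop) supplies $\langle G-\{a,a\}\rangle'(1) = (-1)^n(3n-6(\ell-1))$. A short arithmetic check then yields $\langle G\rangle'(1) = (-1)^n(3n-6\ell)$.

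If instead $G$ is loopless with an edge $\{a,b\}$, apply Theorem 1(ii):
\[
\langle G\rangle = A^2\langle G^{ab}-a-b\rangle + \langle (G^{ab})^a - a - b\rangle + A^{-1}\langle G^a - a\rangle.
\]
All three graphs on the right have strictly fewer vertices than $G$, so the primary inductive hypothesis applies once the loop counts are identified. This bookkeeping is the main obstacle: one must verify that the pivot operation cannot produce a loop (when $x=y$ is a common neighbor of $a$ and $b$, the clause ``$x$ not adjacent to $b$ or $y$ not adjacent to $a$'' is violated, and the other possibilities for $x=y$ fail the earlier adjacency conditions), so $G^{ab}-a-b$ is loopless; meanwhile local complementation at $a$ places a loop at every neighbor of $a$, yielding $\deg_G(a)-1$ loops in $(G^{ab})^a - a - b$ (since $b$ is a neighbor of $a$ and is removed) and $\deg_G(a)$ loops in $G^a - a$. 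Plugging these counts into the inductive hypothesis, together with Proposition 3 for the values at $A=1$, the $\deg_G(a)$-contributions cancel in pairs and the constant offsets collapse so that $\langle G\rangle'(1)$ reduces to $3n(-1)^n = (-1)^n(3n - 6\cdot 0)$, matching $\ell = 0$. The only nontrivial ingredients beyond routine differentiation are Proposition 3 and the loop-count analysis for local complementation and pivot.
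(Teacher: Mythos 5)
Your proposal is correct and follows essentially the same route as the paper: the first identity from the state sum at $A=B=d=1$, and the second by reducing to $\frac{d\left\langle G\right\rangle}{dA}(1)=(-1)^{n}(3n-6\ell)$ and inducting via Theorem 1, using $\left\langle G\right\rangle(1)=(-1)^{n}$ together with the same loop counts ($G^{ab}-a-b$ loopless, $(G^{ab})^{a}-a-b$ with $\deg(a)-1$ loops, $G^{a}-a$ with $\deg(a)$ loops). The bookkeeping you flag as the main obstacle is exactly the bookkeeping the paper performs, and your justification that pivoting cannot create loops is sound.
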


\begin{proof}
Definition 3 immediately implies that $[G](A,B,1)=(A+B)^{n}$, and hence
$[G](1,1,1)=2^{n}$.

The equality $\frac{d\left\langle G\right\rangle }{dA}(1)=(-1)^{n}(3n-6\ell) $
is certainly true for $\left\langle E_{n}\right\rangle =(-1)^{n}A^{3n}$.
Theorem 1 provides a general inductive proof as follows.

If $G$ has a loop at $a$ then
\[
\left\langle G\right\rangle =A^{-2}\left\langle G-\{a,a\}\right\rangle
+(A-A^{-3})\left\langle G^{a}-a\right\rangle .
\]
Using the inductive hypothesis and the equality $\left\langle G\right\rangle
(1)=(-1)^{n}$, we conclude that
\begin{align*}
\frac{d\left\langle G\right\rangle }{dA}(1)  & =-2\left\langle
G-\{a,a\}\right\rangle (1)+(-1)^{n}(3n-6(\ell-1))+4\left\langle G^{a}%
-a\right\rangle (1)\\
& =(-1)^{n}\cdot\left(  -2+3n-6\ell+6-4\right)  .
\end{align*}

Suppose $G$ has no loops and $\{a,b\}$ is an edge of $G$. Then
\[
\left\langle G\right\rangle =A^{2}\left\langle G^{ab}-a-b\right\rangle
+\left\langle (G^{ab})^{a}-a-b\right\rangle +A^{-1}\left\langle G^{a}%
-a\right\rangle .
\]
$G^{ab}-a-b$ is loopless, $G^{a}-a$ has $\deg(a)$ loops, and $(G^{ab}%
)^{a}-a-b$ has $\deg(a)-1$ loops; consequently the inductive hypothesis and
the equality $\left\langle G\right\rangle (1)=(-1)^{n}$ imply
\begin{align*}
\frac{d\left\langle G\right\rangle }{dA}(1)  & =2(-1)^{n-2}+(-1)^{n-2}%
(3n-6)+(-1)^{n-2}\left(  3n-6-6(\deg(a)-1)\right) \\
& -(-1)^{n-1}+(-1)^{n-1}(3n-3-6\deg(a)))\\
& =(-1)^{n}\cdot(2+3n-6+3n-6\deg(a)+1-3n+3+6\deg(a)).
\end{align*}

\end{proof}

\section{Applying Theorem 1 to knots}

Theorem 1 results in recursive algorithms that can be used to calculate the
bracket and Jones polynomials of\ (virtual) knot diagrams. These algorithms
have two distinctive features: links of more than one component never appear,
and the recursions involve simple reduction of the crossing number without any
reference to \textquotedblleft unknotting.\textquotedblright\ As knots and
links are usually treated together in the literature regarding the bracket and
Jones polynomials, it may be that these algorithms have not appeared before.%

\begin{figure}
[h]
\begin{center}
\includegraphics[
trim=0.000000in 6.864201in 0.000000in 1.603466in,
height=1.0058in,
width=3.6461in
]%
{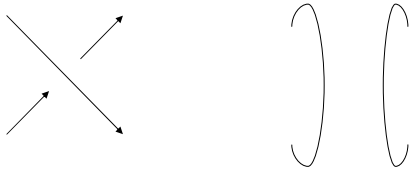}%
\caption{$D_{\infty}=D_{B}$ for a positive crossing}%
\label{Figure 2}%
\end{center}
\end{figure}

Clearly crossing-switches in a knot diagram give rise to loop-toggles in the
looped interlacement graph. Moreover, a diagram $D$ of a knot $K$ yields a
diagram $D_{\infty}$ of a knot $K_{\infty}$ by splicing together the two arcs
directed into a crossing, and also splicing together the two arcs directed out
of that crossing, as in Figure 2. The portion of $K_{\infty}$ outside the
pictured area consists of two arcs; the orientation of $K_{\infty}$ along one
arc is the same as that of $K$ while the orientation of $K_{\infty}$ along the
other arc is the reverse of that of $K$. The effect of this partial
orientation-reversal on the looped intersection graph is simple: if $a$ is the
vertex of $\mathcal{L}(D)$ corresponding to the pictured crossing then
$\mathcal{L}(D_{\infty})=\mathcal{L}(D)^{a}-a$. Consequently part (i) of
Theorem 1 is the extension to the graph bracket polynomial of the Jones
polynomial's braid-plat formula ($tV_{-1}-V_{1}=t^{3q}(t-1)V_{\infty}$ in the
notation of \cite{BK}) and the Kauffman bracket's switching formula
($A\chi-A^{-1}\bar{\chi}=(A^{2}-A^{-2})\asymp$ in the notation of \cite{Kd}).

\medskip\medskip

For a positive crossing like the one in Figure 2, $D_{\infty}$ is also denoted
$D_{B}$. The opposite smoothing, pictured in Figure 3, is denoted $D_{0}$ or
$D_{A}$. At a crossing that differs from the pictured one with regard to the
orientation of $K$, the designations of $D_{A}$ and $D_{B}$ are the same but
the designations of $D_{\infty}$ and $D_{0}$ may be reversed; $D_{0}$ always
denotes the diagram obtained by smoothing a crossing of $D$ in a manner
consistent with the orientation of $K$.%

\begin{figure}
[h]
\begin{center}
\includegraphics[
trim=0.000000in 6.862062in 0.000000in 1.601326in,
height=1.0075in,
width=3.6461in
]%
{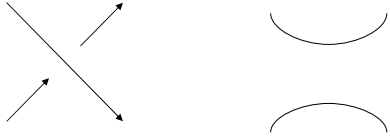}%
\caption{$D_{0}=D_{A}$ for a positive crossing}%
\label{Figure 3}%
\end{center}
\end{figure}

Suppose $D$ is an $n$-crossing knot diagram with two interlaced positive
crossings corresponding to vertices $a$ and $b$ of $\mathcal{L}(D)$. We denote
the $(n-2)$-crossing diagrams obtained by smoothing both crossings by first
indicating the smoothing at the $a$ crossing. For instance $D_{\infty0}%
=D_{BB}$ denotes the diagram obtained by smoothing the $a$ crossing against
orientation and then smoothing the $b$ crossing with orientation; as the two
crossings are interlaced the $b$ crossing becomes negative when the $a$
crossing is smoothed, so its orientation-consistent smoothing is $(D_{\infty
})_{B}$ rather than $(D_{\infty})_{A}$. The fundamental recursive formula for
the Kauffman bracket is $[D]=A[D_{A}]+B[D_{B}]$; applying this fundamental
formula twice yields
\[
\lbrack D]=A^{2}[D_{AA}]+AB[D_{AB}]+B[D_{B}].
\]
The extension of this formula to the graph bracket polynomial is part (ii) of
Theorem 1. Consequently, the recursive description of the graph bracket
polynomial given in Theorem 1 specializes to the following algorithm for the
bracket polynomial of a (virtual) knot diagram.

\medskip

1. If $D_{-}$ is a diagram that contains a negative (classical) crossing then
apply the 3-variable form of the switching formula of the Kauffman bracket,
$A[D_{-}]=B[D_{+}]+(A^{2}-B^{2})[D_{\infty}]$, where $D_{+}$ is the diagram
obtained by reversing that negative crossing.

2. If $D$ is a diagram with only positive (classical) crossings, apply the
formula $[D]=A^{2}[D_{AA}]+AB[D_{AB}]+B[D_{B}]$ to any interlaced pair of crossings.

3. Repeat steps 1 and 2 as necessary.

4. A diagram with $n$ crossings, all positive and no two interlaced, has
bracket polynomial $(Ad+B)^{n}$.

\medskip

The resulting algorithm for the Jones polynomial involves formulas that are
slightly more complicated, because obtaining the Jones polynomial from the
bracket involves not only an evaluation but also multiplication by
$(-1)^{n}\cdot t^{3w(D)/4}$. In the notation of \cite{BK}, step 1 involves the
formula $tV_{-1}-V_{1}=t^{3q}(t-1)V_{\infty}$ and step 2 involves the formula
$V_{11}=tV_{00}+t^{3q}V_{0\infty}-t^{1+3q}V_{\infty}$; in each formula $q$ is
the linking number of the two components of the link diagrammed in $D_{0}$.

\section{Examples of graph bracket polynomials}

\begin{example}
Let $K_{n}$ be the complete simple graph on $n$ vertices. Then
\[
\lbrack K_{n}]=\frac{(A+Bd)^{n}-A^{n}}{d}+\left\{
\begin{array}
[c]{c}%
A^{n}d\mathrm{,\,if\,}n\mathrm{\,is\,odd}\\
A^{n}\mathrm{,\,if\,}n\mathrm{\,is\,even}%
\end{array}
\right.  .
\]

\end{example}

\begin{proof}
The proposition is true for $K_{0}=E_{0}$ and $K_{1}=E_{1}$.

Proceeding inductively, suppose $n\geq2$. Proposition 1 and part (ii) of
Theorem 1 tell us that $[K_{n}]=A^{2}[K_{n-2}]+AB[L_{1}]^{n-2}+B[L_{1}]^{n-1}
$, where $L_{1}$ is the graph with one vertex and one loop. Consequently
\begin{align*}
\lbrack K_{n}]  & =A^{2}[K_{n-2}]+AB(A+Bd)^{n-2}+B(A+Bd)^{n-1}\\
& =A^{2}[K_{n-2}]+(A+Bd)^{n-2}\cdot(2AB+B^{2}d)\\
& =\frac{A^{2}(A+Bd)^{n-2}-A^{n}}{d}+(A+Bd)^{n-2}\cdot(2AB+B^{2}d)+\left\{
\begin{array}
[c]{c}%
A^{n}d\mathrm{,\,if\,}n\mathrm{\,is\,odd}\\
A^{n}\mathrm{,\,if\,}n\mathrm{\,is\,even}%
\end{array}
\right. \\
& =\frac{(A+Bd)^{n-2}(A^{2}+2ABd+B^{2}d^{2})-A^{n}}{d}+\left\{
\begin{array}
[c]{c}%
A^{n}d\mathrm{,\,if\,}n\mathrm{\,is\,odd}\\
A^{n}\mathrm{,\,if\,}n\mathrm{\,is\,even}%
\end{array}
\right.  .
\end{align*}

\end{proof}

\begin{example}
\noindent Let $P_{n}$ be the simple path with $n$ vertices, and let $L_{n}$ be
the graph obtained from $P_{n}$ by adding a loop to one vertex at the end of
the path. ($L_{n}$ is a \emph{lollipop}.) Then
\[
\lbrack L_{n}]=(A-A^{-1}B^{2})^{n}+A^{-1}B\sum_{i=0}^{n-1}(A-A^{-1}B^{2}%
)^{i}[P_{n-i}].
\]

\end{example}

\begin{proof}
The proposition holds when $n=1$, for $[L_{1}]=A+Bd=A-A^{-1}B^{2}%
+A^{-1}B(Ad+B)$. Proceeding inductively, if $n>1$ then part (i) of Theorem 1
tells us that $[L_{n}]=A^{-1}B[P_{n}]+(A-A^{-1}B^{2})[L_{n-1}]$.
\end{proof}

\begin{example}
Let $P_{n}$ be the simple path with $n$ vertices. Then
\begin{align*}
\lbrack P_{n}]  & =\gamma_{1}({A^{-1}B^{2}-A)}^{n}+\gamma_{2}(A+B)^{n}\\
& +\gamma_{3}2^{-n}\left(  {-B+\sqrt{4A^{2}-3B^{2}}}\right)  ^{n}+\gamma
_{4}2^{-n}\left(  {-B-\sqrt{4A^{2}-3B^{2}}}\right)  ^{n}%
\end{align*}
\begin{align*}
with~\gamma_{1} &  =0,~\gamma_{2}={\frac{d+2}{3},~\text{$\gamma$}}_{3}%
{=\frac{\left(  4A-3B-\sqrt{4A^{2}-3B^{2}}\right)  (d-1)}{6\sqrt{4A^{2}%
-3B^{2}}}}\\
{~and~{\gamma}_{4}} &  {=~\frac{\left(  3B-4A-\sqrt{4A^{2}-3B^{2}}\right)
(d-1)}{6\sqrt{4A^{2}-3B^{2}}}.}%
\end{align*}

\end{example}

\begin{proof}
Suppose for the moment that $n\geq4$. Example 2\ and part (ii) of Theorem 1
tell us that
\[
\lbrack P_{n}]=(A^{2}+AB)[P_{n-2}]+B[L_{n-1}]
\]
\[
=(A^{2}+AB)[P_{n-2}]+B(A-A^{-1}B^{2})^{n-1}+A^{-1}B^{2}\sum_{i=0}%
^{n-2}(A-A^{-1}B^{2})^{i}[P_{n-1-i}]
\]
Comparing this with
\[
\lbrack P_{n-2}]=(A^{2}+AB)[P_{n-4}]+B(A-A^{-1}B^{2})^{n-3}+A^{-1}B^{2}%
\sum_{i=0}^{n-4}(A-A^{-1}B^{2})^{i}[P_{n-3-i}]
\]
we see that
\begin{align*}
\lbrack P_{n}]  & =(A-A^{-1}B^{2})^{2}[P_{n-2}]-(A-A^{-1}B^{2})^{2}%
(A^{2}+AB)[P_{n-4}]\\
& +(A^{2}+AB)[P_{n-2}]+A^{-1}B^{2}[P_{n-1}]+A^{-1}B^{2}(A-A^{-1}B^{2}%
)[P_{n-2}]
\end{align*}
\[
=A^{-1}B^{2}[P_{n-1}]+\left(  A+B\right)  \left(  2A-B\right)  [P_{n-2}%
]-(A^{2}+AB)(A-A^{-1}B^{2})^{2}[P_{n-4}].
\]

\medskip\noindent Consequently the $[P_{n}]$ satisfy a linear recurrence with
characteristic polynomial
\begin{align*}
& x^{4}-A^{-1}B^{2}x^{3}-\left(  A+B\right)  \left(  2A-B\right)  x^{2}%
+(A^{2}+AB)(A-A^{-1}B^{2})^{2}\\
& =\left(  x+A-A^{-1}B^{2}\right)  (x-A-B)\left(  x^{2}+Bx-A^{2}+B^{2}\right)
;
\end{align*}
this implies that
\begin{align*}
\lbrack P_{n}]  & =\gamma_{1}({A^{-1}B^{2}-A)}^{n}+\gamma_{2}(A+B)^{n}\\
& +\gamma_{3}2^{-n}\left(  {-B+\sqrt{4A^{2}-3B^{2}}}\right)  ^{n}+\gamma
_{4}2^{-n}\left(  {-B-\sqrt{4A^{2}-3B^{2}}}\right)  ^{n}%
\end{align*}
for some coefficients $\gamma_{i}$. The reader may verify that the $\gamma
_{i}$ given in the statement yield the correct values of $[P_{n}]$ for $0\leq
n\leq3$.
\end{proof}

Here is another version of the formula of Example 3.%

\begin{align*}
\lbrack P_{n}]  & =\left(  {\frac{d+2}{3}}\right)  (A+B)^{n}-\left(
{\frac{d-1}{3\cdot2^{n}}}\right)  \left\{
\begin{array}
[c]{r}%
\left(  4A^{2}-3B^{2}\right)  ^{n/2}\mathrm{,\,if\,}n\mathrm{\,is\,even}\\
0\mathrm{,\,if\,}n\mathrm{\,is\,odd}%
\end{array}
\right. \\
& ~\\
& +\left(  {\frac{d-1}{3\cdot2^{n-2}}}\right)  A\sum_{j=0}^{\left\lfloor
\frac{n-1}{2}\right\rfloor }(4A^{2}-3B^{2})^{j}(-B)^{n-2j-1}\binom{n}{2j+1}\\
& ~\\
& +\left(  {\frac{d-1}{3\cdot2^{n}}}\right)  \sum_{j=0}^{\left\lfloor
\frac{n-1}{2}\right\rfloor }(4A^{2}-3B^{2})^{j}(-B)^{n-2j}\left(  3\binom
{n}{2j+1}-\binom{n}{2j}\right)  .
\end{align*}

\medskip

Computer calculations indicate that the graph bracket polynomial is
surprisingly effective in distinguishing small graphs: it distinguishes all
the non-isomorphic graphs with no more than 5 vertices, and also the simple
graphs with 6 vertices. (Indeed, the one-variable simplification $[G](A,1,A)$
distinguishes all these graphs.) There are 5027 different graph bracket
polynomials among the 5096 non-isomorphic 6-vertex looped graphs, and 1028
different graph bracket polynomials among the 1044 non-isomorphic simple
7-vertex graphs.

\medskip%
\begin{figure}
[ptb]
\begin{center}
\includegraphics[
trim=1.010135in 5.397656in 0.763580in 0.889982in,
height=2.8945in,
width=4.2341in
]%
{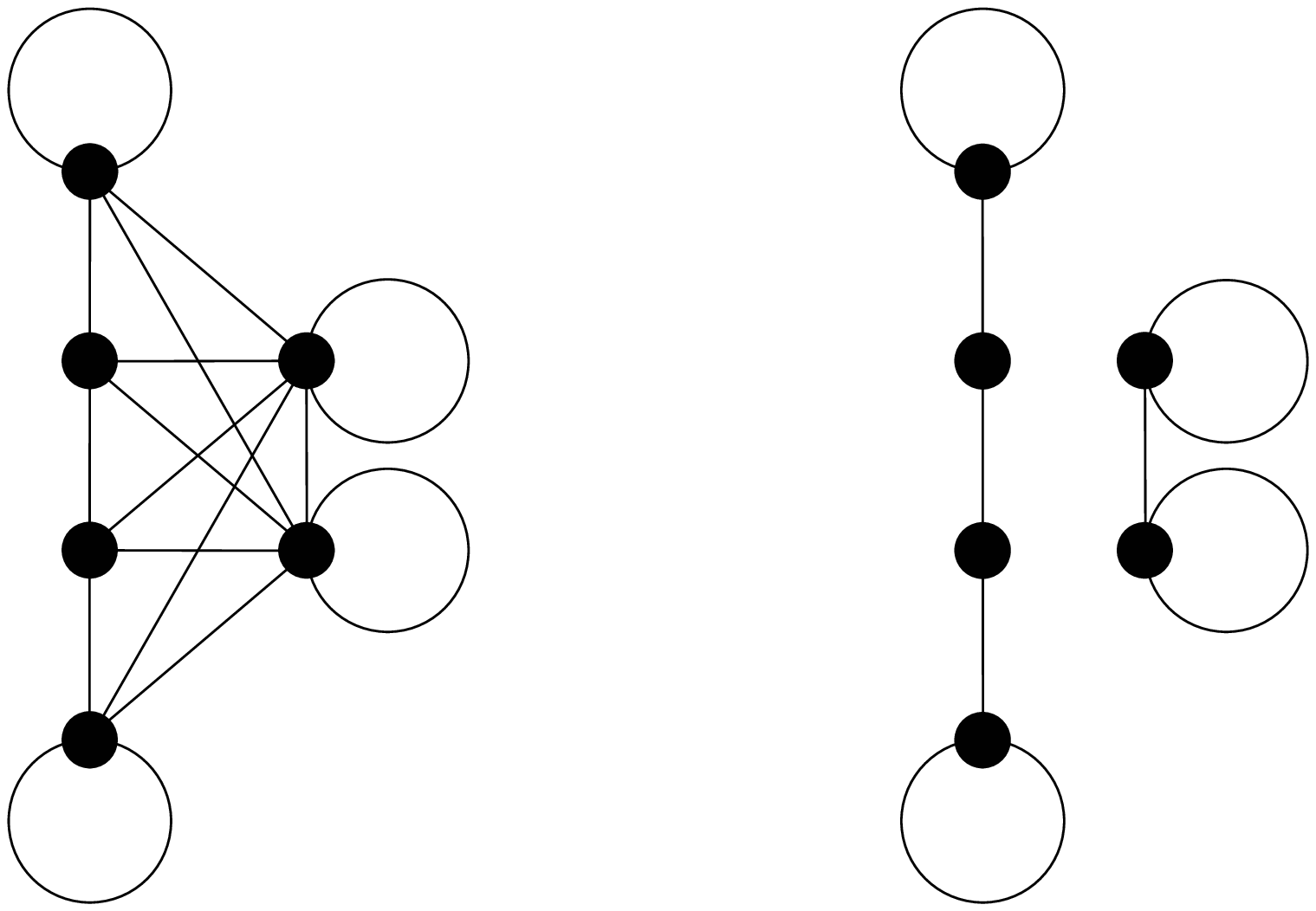}%
\caption{two 6-vertex graphs with the same graph bracket polynomial }%
\label{Figure 4}%
\end{center}
\end{figure}

The graph bracket polynomial distinguishes trees with fewer than 7 vertices,
so in general $[G]$ is not determined by the cycle matroid of $G$. Even the
graph Jones polynomial distinguishes some trees, despite being a much less
sensitive invariant than the graph bracket. It would seem, then, that the
graph bracket polynomial provides a novel combinatorial understanding of the
Kauffman bracket and the Jones polynomial, quite different from the very
important relationship between these knot invariants and the Tutte polynomial
originally observed by Thistlethwaite \cite{T}. (See \cite{H, JVW} and their
references for examples of results which have been derived using the
Jones-Tutte relationship.)

\medskip

When the graph bracket polynomial does fail to distinguish two graphs, it can
be surprisingly insensitive to structural differences between them. For
instance, the connected graph on the left in Figure 4 has the same graph
bracket as the disconnected graph on the right.

\section{The knot theory of looped interlacement graphs}

The Reidemeister moves \cite{R} give a combinatorial description of the
relationship among the different diagrams of a given knot type. The following
definition describes the equivalence relation on graphs that is generated by
the \textquotedblleft images\textquotedblright\ of these Reidemeister moves
under the looped interlacement graph construction.%

\begin{figure}
[ptbh]
\begin{center}
\includegraphics[
trim=1.075278in 0.000000in 0.000000in 0.000000in,
height=6.4454in,
width=4.3301in
]%
{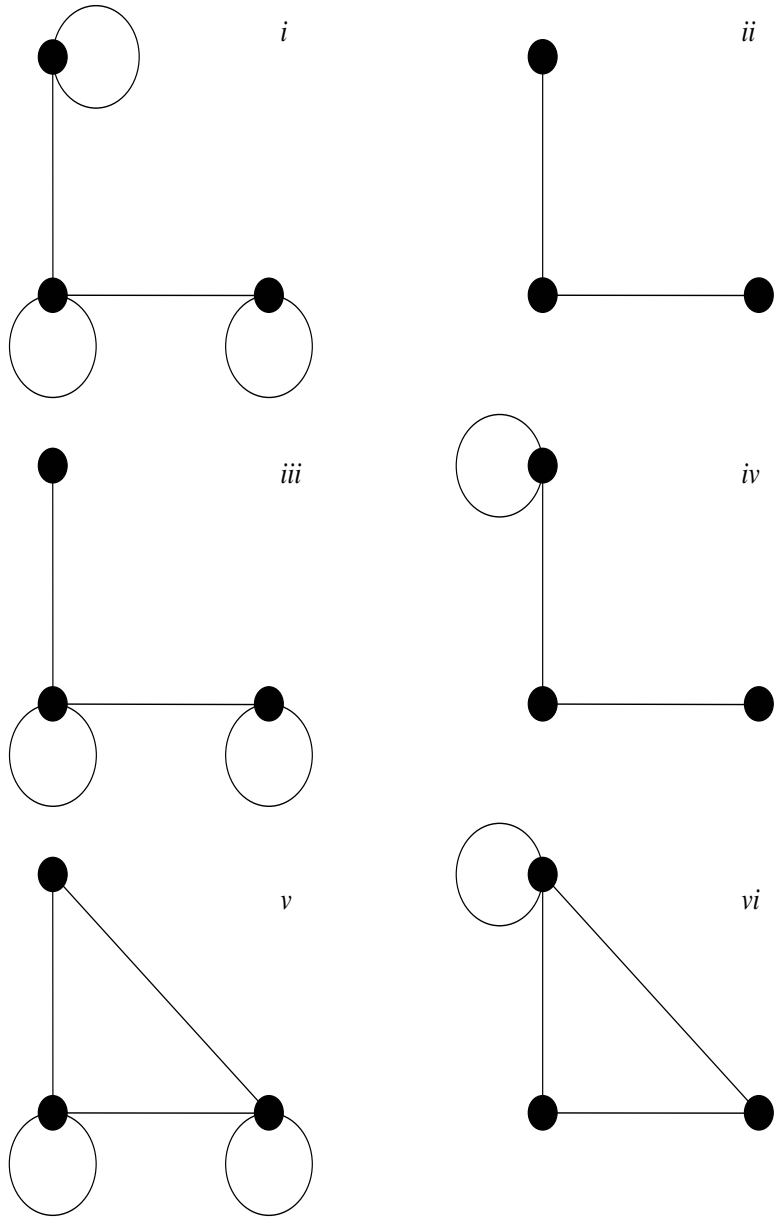}%
\caption{An $\Omega.3$ move involves toggling\ the non-loop edges in one of
these six configurations, provided that every vertex outside the picture is
adjacent to 0 or precisely 2 of the pictured vertices.}%
\label{Figure 5}%
\end{center}
\end{figure}

\begin{definition}
Two graphs are \emph{Reidemeister equivalent} if one can be obtained from the
other through some finite sequence of the following \emph{Reidemeister moves}.

$\Omega.1$. Adjoin or remove an isolated vertex. The isolated vertex may be
looped or unlooped.

$\Omega.2$. Adjoin or remove two vertices $v$ and $w$ such that (i) $v$ is
looped, (ii) $w$ is not looped, and (iii) every vertex $x\not \in \{v,w\}$
that is adjacent to one of $v,w$ is also adjacent to the other. The two
vertices $v$ and $w$ may be adjacent or nonadjacent.

$\Omega.3$. Toggle\ the non-loop adjacencies among three distinct vertices
$u$, $v$ and $w$ such that (i) every vertex $x\not \in \{u,v,w\}$ that is
adjacent to any of $u,v,w$ is adjacent to precisely two of $u,v,w$ and (ii)
either the initial or the terminal subgraph spanned by $u,v,w$ is isomorphic
to one of the three-vertex graphs pictured in Figure 5.
\end{definition}

\begin{theorem}
If $D_{1}$ and $D_{2}$ are diagrams of the same (classical or virtual) knot
then the looped interlacement graphs $\mathcal{L}(D_{1})$ and $\mathcal{L}%
(D_{2})$ are Reidemeister equivalent.
\end{theorem}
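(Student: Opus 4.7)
The plan is to reduce to a single Reidemeister move on diagrams and then verify, case by case, that the induced change on the looped interlacement graph is realized by a sequence of the graph Reidemeister moves of Definition 7. Since $D_1$ and $D_2$ represent the same knot, there is a finite chain of classical Reidemeister moves R1, R2, R3 (together with planar isotopies, which do not change $\mathcal{L}(D)$) connecting them; by transitivity of Reidemeister equivalence on graphs it suffices to handle one diagrammatic move at a time.

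For R1, a kink introduces a single new crossing $c$ in $D$. Tracing $D$, the two occurrences of $c$ are adjacent on the Gauss code, so $c$ is not interlaced with any other crossing. Hence the corresponding vertex of $\mathcal{L}(D)$ is isolated; it carries a loop or not according to the sign of the crossing. Adjoining or deleting such a vertex is exactly an $\Omega.1$ move. For R2, two new crossings $c$ and $c'$ appear, one positive and one negative, and a short inspection of the local Gauss code shows that $c$ and $c'$ are interlaced with each other, and that any third crossing is interlaced with $c$ if and only if it is interlaced with $c'$ (because the strand that records the third crossing enters and exits the R2 disk along the same pair of arcs). Thus the two new vertices of $\mathcal{L}(D)$ satisfy conditions (i)--(iii) of $\Omega.2$: one is looped, the other is not, and every outside vertex adjacent to one is adjacent to the other. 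This is precisely an $\Omega.2$ move.

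The main work is R3. Label the three crossings involved by $u,v,w$. The key geometric fact is that the three arcs of $D$ that bound the R3 triangle partition a neighborhood of the triangle into regions, and any strand of $D$ that enters the triangle must leave it; consequently any fourth crossing $x$ contributes to the Gauss code a substring that meets the three local arcs in an even number of points. From this one deduces that $x$ is interlaced with precisely $0$ or $2$ of $u,v,w$ both before and after the move, which is condition (i) of $\Omega.3$. The signs of $u,v,w$ are unchanged by R3, so the loops at these vertices are unchanged. What does change is the set of interlacements among $u,v,w$ themselves: working out the six local pictures (three cyclic orderings of the overcrossing strand, times two orientations for the third strand, matching Figure 5) shows that each of the six configurations is carried to one of the others by toggling the three possible non-loop edges among $u,v,w$. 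This is exactly the operation described in $\Omega.3$.

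The step I expect to be the hardest is the bookkeeping for R3: verifying that the interlacement pattern among $u,v,w$ in each of the six local Gauss configurations flips in the prescribed way, and that condition (i) continues to hold for every outside vertex. I would handle it by choosing a concrete orientation convention, writing down the local Gauss code for the ``before'' and ``after'' pictures in each of the six cases, reading off the three pairwise interlacements, and checking that the symmetric difference is exactly the non-loop edges displayed in Figure 5. The analogous check for outside vertices is uniform in all six cases and follows from the even-intersection observation above.
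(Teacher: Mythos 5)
Your proposal follows essentially the same route as the paper's proof: reduce to one diagrammatic move at a time, observe that R1 yields an isolated vertex ($\Omega.1$), that the two crossings created by R2 have identical outside neighborhoods and opposite signs ($\Omega.2$), and then carry out the R3 bookkeeping by enumerating the local Gauss codes of the Reidemeister triangle, checking the parity condition for outside crossings, and matching the resulting three-vertex configurations against Figure 5. The paper organizes the R3 case analysis slightly differently (eight Gauss-code words times two triangle signs, rather than your six local pictures), but this is the same finite verification in a different parametrization.
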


Let $\mathcal{D}$ denote the class of virtual (possibly classical) knot
diagrams, and let $\mathcal{G}$ denote the class of looped graphs. Theorem 2
implies that the function $\mathcal{L}\colon\mathcal{D}\rightarrow\mathcal{G}$
induces a function ${\widetilde{\mathcal{L}}}\colon\mathcal{D}/\!\sim
~~\rightarrow~\mathcal{G}/\!\sim$, where $\sim$ denotes Reidemeister
equivalence. That is, ${\widetilde{\mathcal{L}}}$ is a knot invariant. This
function is not injective, since the looped interlacement graph of a knot
diagram may be equivalent (even identical) to the looped interlacement graph
of a diagram of a different knot. Not all graphs are equivalent to looped
interlacement graphs of classical knot diagrams, but we do not know whether or
not ${\widetilde{\mathcal{L}}}$ is surjective.

\medskip

Since ${\widetilde{\mathcal{L}}}$ is a knot invariant, so is $f\circ
{\widetilde{\mathcal{L}}}$ for any function $f\colon\mathcal{G}/\!\sim
~~\rightarrow~X$. Such extended invariants are difficult to find because the
graph-theoretic Reidemeister moves affect the structure of a graph
dramatically, changing the number of vertices, number of edges, degree
sequence, connectedness, chromaticity, cycle structure, etc. However the
recursive description of the graph bracket can be used to prove the following.

\begin{theorem}
The graph Jones polynomial $V_{G}$ is invariant under graph-theoretic
Reidemeister equivalence.
\end{theorem}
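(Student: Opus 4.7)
The plan is to follow Kauffman's original strategy for classical links: verify that the reduced bracket $\langle G \rangle$ transforms predictably under each of the three graph Reidemeister moves, and use the normalization $(-1)^n t^{(3n-6\ell)/4}$ in Definition 6 to absorb the change introduced by $\Omega.1$. The moves $\Omega.2$ and $\Omega.3$ leave both $(-1)^n$ and $3n-6\ell$ unchanged, so for them it suffices to show $\langle G' \rangle = \langle G \rangle$; for $\Omega.1$ I need to match the bracket's change against the normalization's change.

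I would begin with $\Omega.1$. By Proposition 1, adjoining an isolated vertex multiplies $[G]$ by $[E_1]=Ad+B$ or by $[L_1]=A+Bd$, and setting $B=A^{-1}$, $d=-A^2-A^{-2}$ gives $\langle E_1\rangle = -A^3$ and $\langle L_1\rangle = -A^{-3}$. A brief arithmetic check confirms that these factors are precisely cancelled by the change in $(-1)^n t^{(3n-6\ell)/4}$ brought about by $\Omega.1$ (in the unlooped case $n$ increases by $1$; in the looped case $n$ and $\ell$ each increase by $1$).

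For $\Omega.2$, let $G'=G\cup\{v,w\}$ with $v$ looped, $w$ unlooped, and common external neighborhood $N\subseteq V(G)$. I would apply Theorem 1(i) at the loop at $v$, and then, when $v\sim w$, apply Theorem 1(ii) to the loopless adjacent pair $v,w$ in $G'-\{v,v\}$. The shared-neighborhood condition forces the pivot $(G'-\{v,v\})^{vw}$ to fix the induced subgraph on $V(G)$, so one of the recursive terms is $A^2[G]$, while the remaining terms factor as $[G_N]$ times a polynomial in $A,B,d$, where $G_N$ denotes $G$ with the loops on $N$ and the adjacencies within $N$ toggled. Collecting yields
\[
[G'] = AB\,[G] + (A^2 + ABd + B^2)\,[G_N],
\]
and the coefficient $A^2 + ABd + B^2$ vanishes under the specialization defining $\langle\cdot\rangle$. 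The non-adjacent subcase is handled analogously by choosing a different pair on which to reduce; in each subcase the shared-neighborhood structure is the source of the cancellation.

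The main obstacle is $\Omega.3$. For each of the six configurations permitted by Figure 5, I would select one of $u,v,w$ and expand $[G]$ and $[G']$ using the appropriate part of Theorem 1 at that vertex. The hypothesis that every external neighbor of $\{u,v,w\}$ is adjacent to precisely two of them controls how local complementation and pivoting interact with the toggling defining $\Omega.3$, so the resulting summands on each side pair up, coinciding either directly or modulo an $\Omega.2$ move already verified above. This is essentially the graph-theoretic transcription of the classical fact that R3 invariance of the Kauffman bracket follows from R2 invariance; the case analysis is intricate but follows a deterministic pattern once the right expansion vertex is identified in each of the six configurations.
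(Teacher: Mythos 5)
Your overall architecture matches the paper's ($\Omega.1$ via Proposition 1 and the normalization; $\Omega.2$ and $\Omega.3$ via the recursion of Theorem 1), and your treatment of the adjacent $\Omega.2$ subcase is correct and in fact cleaner than the paper's: the identity $[G']=AB[G]+(A^{2}+ABd+B^{2})[G_{N}]$ checks out, because the pivot $(G'-\{v,v\})^{vw}$ is trivial when $v$ and $w$ are twins and local complementation at $v$ detaches $w$, yielding $G_{N}\cup L_{1}$; the paper instead proves $\Omega.2$-invariance by a double induction descending into the host graph (Propositions 5 and 6). However, the non-adjacent subcase is not ``analogous.'' After removing the loop at $v$, the vertices $v$ and $w$ are non-adjacent and loopless, so Theorem 1(ii) cannot be applied to that pair; any other choice of reduction pair involves a vertex of $N$, whose local complement and pivot entangle the internal structure of $G$, and $(G')^{v}-v$ is now $G_{N}$ with $w$ still attached to all of $N$ --- not a disjoint union and not an $\Omega.2$ extension of anything --- so your recursion does not close. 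You need the paper's induction on the host graph (or some substitute) here.

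The more serious gap is $\Omega.3$, which occupies most of the paper's proof (Lemma 1 and Propositions 7 and 8) and which you dispatch by asserting that the summands ``pair up \dots directly or modulo an $\Omega.2$ move.'' Three things are missing. First, the productive expansion site is generally not one of $u,v,w$ but a pair $(a,v)$ with $a$ an external degree-two vertex (or a loop or edge disjoint from $H$ in the general case); two of the three resulting terms then coincide as graphs, but the third ($G^{a}-a$ versus $(G')^{a}-a$) is a smaller instance of $\Omega.3$, so the argument is a genuine induction on $\left\vert V(G)\right\vert$, not a reduction to the already-proved $\Omega.2$ case. Second, the induction has a degenerate configuration with no valid expansion site: when all external vertices are attached to the same pair of $\{u,v,w\}$ (e.g.\ $N_{uv}=N_{uw}=\emptyset$), the paper must compare two specific graph families $\Gamma$ and $\Gamma'$ by a direct bracket computation passing through $\left\langle K_{m-1}+I\right\rangle$ (Lemma 1); nothing in your outline produces this step. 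Third, reducing from the general $\Omega.3$ hypothesis (external neighbors adjacent to exactly two of $u,v,w$, but possibly looped or mutually adjacent) to the controlled situation requires a separate induction on the loops and edges not incident on $H$, together with the parity argument you allude to showing that pivoting and local complementation preserve ``adjacent to an even number of $u,v,w$.'' Without these ingredients the $\Omega.3$ case --- the heart of the theorem --- remains unproved.
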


\section{Six comments on Reidemeister equivalence}

1. Diagrams of distinct classical or virtual knots may have identical looped
interlacement graphs. For instance, this occurs if one diagram is obtained
from another through mutation \cite{CL} or virtualization \cite{Kdv}.

\medskip

2. Some observations of Section 3 imply that several familiar features of
classical knot theory do not extend to Reidemeister equivalence of graphs. For
instance, a graph cannot generally be changed into a Reidemeister equivalent
of an edgeless graph by adjoining or deleting loops.

\medskip%

\begin{figure}
[h]
\begin{center}
\includegraphics[
trim=0.000000in 0.227844in 0.000000in 2.289137in,
height=3.6184in,
width=3.6461in
]%
{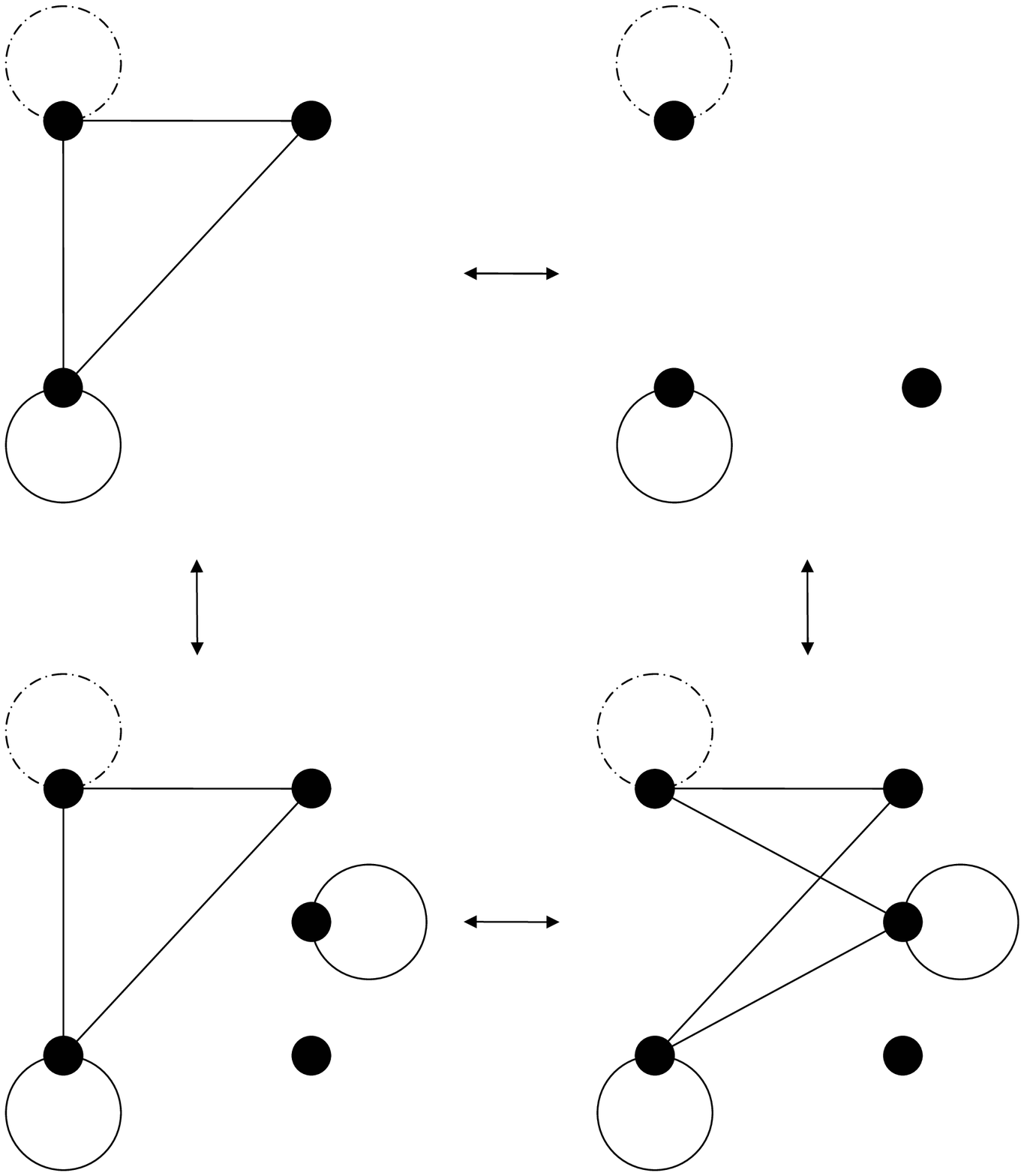}%
\caption{{}}%
\end{center}
\end{figure}

\medskip

3. \"{O}stlund \cite{O} showed that the various $\Omega.3$ moves on knot
diagrams are inter-related through composition with $\Omega.2$ moves. As
illustrated in\ Figures 6 and 7, the same observation holds for the graph
Reidemeister moves. In each of these figures, the vertical double-headed
arrows represent $\Omega.2$ moves involving the insertion or deletion of two
vertices whose neighbors outside the picture are the same as those of the
vertex directly above or below them. Without the dashed loop, Figure 6
illustrates the fact that the $\Omega.3$ move pictured in Figure 5 \textit{vi}
can be obtained from the one pictured in Figure 5 \textit{iii} through
composition with $\Omega.2$ moves; with the dashed loop it shows that the
$\Omega.3$ move pictured in Figure 5 \textit{v} can be obtained from the one
pictured in Figure 5 \textit{i}. If all the loops in Figure 6 are toggled the
resulting figure shows that part \textit{vi} of Figure 5 can be obtained from
part \textit{ii} (without the dashed loop) and part \textit{v} from part
\textit{iv} (with the dashed loop). Similarly, without the dashed loop Figure
7 shows that the $\Omega.3$ move pictured in Figure 5 \textit{ii} can be
obtained by composing the one pictured in Figure 5 \textit{vi} with $\Omega.2$
moves; with the dashed loop Figure 7 illustrates that the move pictured in
Figure 5 \textit{iv} can be obtained from the one in Figure 5 \textit{v}.
Toggling the loops in Figure 7 we see that part \textit{iii} of Figure 5 can
be obtained from part \textit{vi} (without the dashed loop) and part
\textit{i} from part \textit{v} (with the dashed loop). In sum, we conclude
that the $\Omega.3$ moves pictured in Figure 5 \textit{i}, \textit{iv} and
\textit{v} can be obtained from each other through composition with $\Omega.2$
moves, and the $\Omega.3$ moves pictured in Figure 5 \textit{ii}, \textit{iii}
and \textit{vi} can also be obtained from each other.

\medskip%

\begin{figure}
[h]
\begin{center}
\includegraphics[
trim=0.000000in 0.685671in 0.000000in 2.515911in,
height=3.3166in,
width=3.6461in
]%
{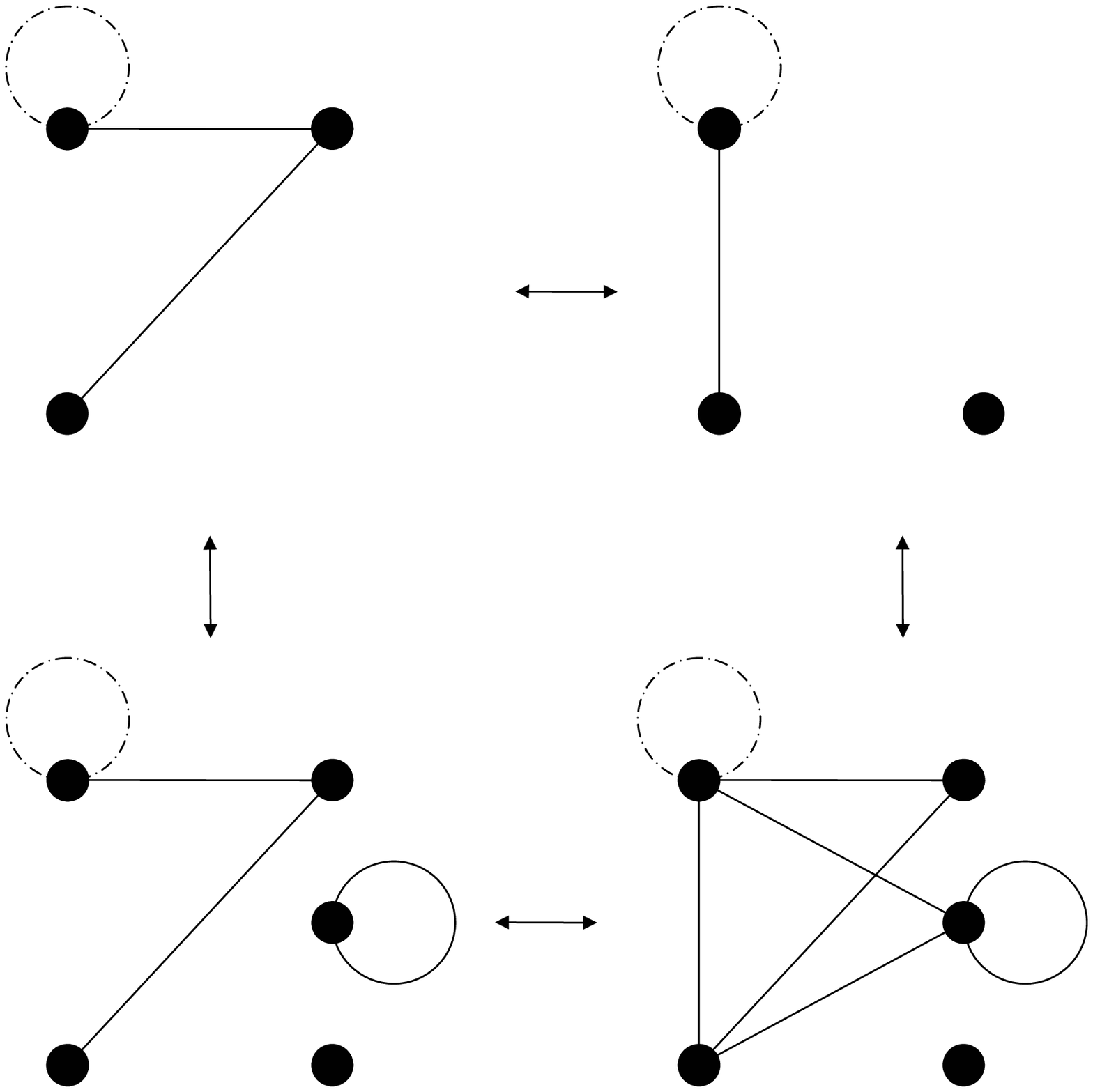}%
\caption{{}}%
\end{center}
\end{figure}

4. It will come as no surprise that demonstrating the Reidemeister equivalence
between two graphs can sometimes require the use of intermediate graphs which
are larger than both of the original ones. A simple example appears in Figure
8. Neither of the two graphs that appear at the top of the figure can be
subjected to any Reidemeister move that does not increase the number of
vertices, but as shown in the figure, the two graphs are indeed equivalent.%

\begin{figure}
[h]
\begin{center}
\includegraphics[
height=5.3765in,
width=4.1511in
]%
{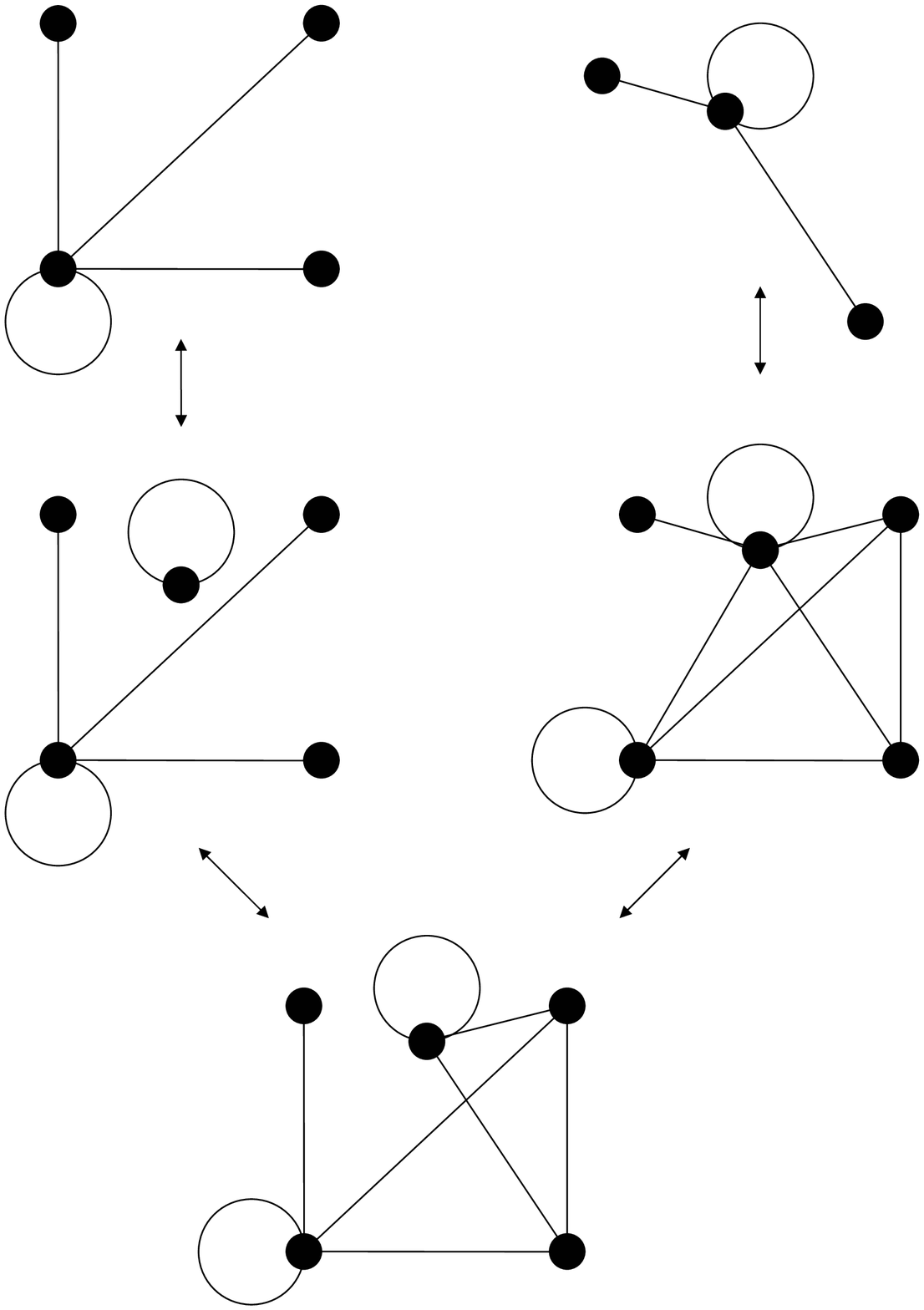}%
\caption{Demonstrating the Reidemeister equivalence between the two topmost
graphs requires larger intermediate graphs.}%
\label{Figure 8}%
\end{center}
\end{figure}

\bigskip

5. Gauss \cite{G} observed that the double occurrence words (Gauss codes)
arising from generic closed curves in the plane have the following property:
An even number of other symbols appear between the two occurrences of each
symbol in the word. Thus the looped interlacement graph of a classical knot
diagram must be Eulerian. It follows that every Reidemeister equivalence class
contains infinitely many graphs that are not looped interlacement graphs of
classical knot diagrams. Let $n\geq1$ be an integer, let $E_{n}$ be the
edgeless graph with $n$ vertices, and let $H$ be the connected non-Eulerian
graph obtained from $E_{n}$ by adjoining two vertices with an $\Omega.2$ move.
(The two new vertices should be adjacent if $n$ is even and nonadjacent if $n$
is odd.) Given any graph $G\,$, the disjoint union $G\cup H$ can be obtained
from $G$ with Reidemeister moves: first adjoin $n$ isolated vertices to $G$
using $\Omega.1$ moves, and then use an $\Omega.2$ move to attach the
remaining two vertices of $H$ to the first $n$. As $G\cup H$ is not Eulerian,
it is not the looped interlacement graph of any classical knot diagram.

\medskip

6. \textit{Circle graphs} are the intersection graphs of chord diagrams on
$\mathbb{S}^{1}$; they have received a considerable amount of attention. (See
\cite{Bc} for instance.) If a virtual knot diagram $D$ is obtained from a
classical knot diagram $D^{\prime}$ by designating some crossings as virtual
then $\mathcal{L}(D)$ is the subgraph of $\mathcal{L}(D^{\prime})$ induced by
the vertices that correspond to classical crossings of $D$, so the simple
graph obtained from $\mathcal{L}(D)$ by ignoring all loops is an induced
subgraph of a circle graph. Not all simple graphs are induced subgraphs of
circle graphs -- for instance the wheel graph $W_{5}$ is not, and hence
neither is any graph which contains an induced subgraph isomorphic to $W_{5}$
-- and consequently, many graphs do not arise as looped interlacement graphs
of virtual knot diagrams. We do not know whether or not every graph is
Reidemeister equivalent to the looped interlacement graph of some virtual knot diagram.

\section{Proof of Theorem 2}

To prove Theorem 2 it is necessary to verify that whenever one applies a
Reidemeister move to a (virtual) knot diagram, the effect on the looped
interlacement graph is to apply one of the graph-theoretic Reidemeister moves
described in Definition 7. We need not take the additional moves of virtual
knot theory \cite{Kv} into account, as they do not affect the looped
interlacement graph.%
\begin{figure}
[h]
\begin{center}
\includegraphics[
height=3.659in,
width=3.1531in
]%
{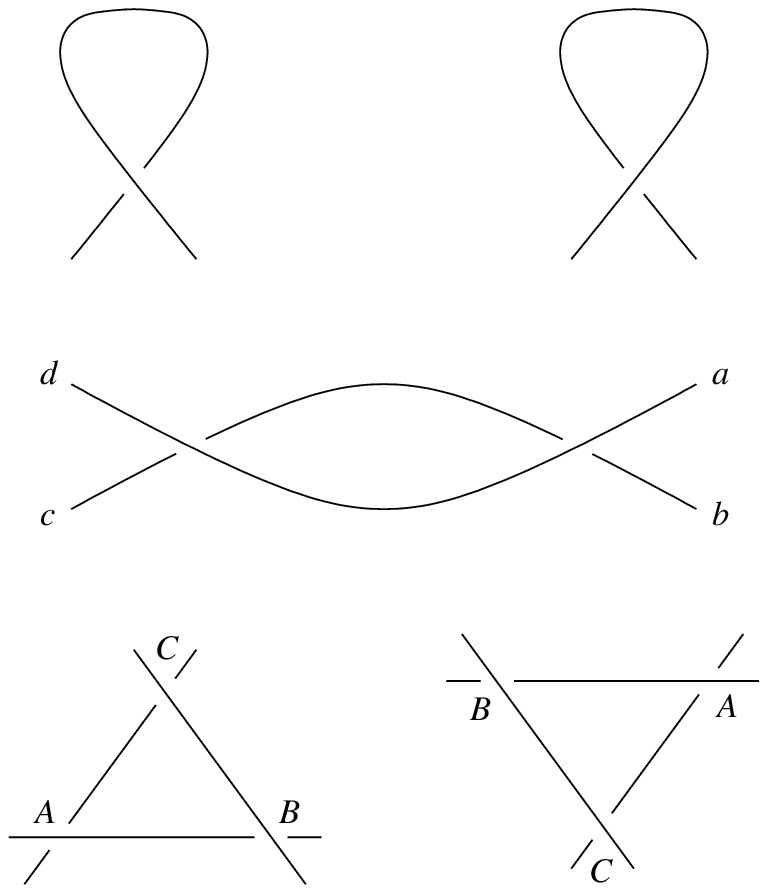}%
\caption{The first two types of knot-theoretic Reidemeister moves involve
inserting or removing configurations like those in the top and middle rows of
the figure. The third type of Reidemeister move involves \textquotedblleft
moving arc \textit{AB} through crossing \textit{C},\textquotedblright\ as
indicated in the bottom row of the figure.}%
\label{Figure 9}%
\end{center}
\end{figure}

\medskip

If a new crossing is inserted using a Reidemeister move of type $\Omega.1$,
then the new crossing is not interlaced with any other so the corresponding
vertex of the looped interlacement graph is isolated.

\medskip

Suppose two new crossings are inserted using a Reidemeister move of type
$\Omega.2$ as illustrated in the middle of Figure 9. We trace $D$ by starting
near the point marked $a$ and leaving the pictured portion of $D$. As $D$ is a
knot diagram, we must re-enter the pictured portion first at $b$ or $c$. The
crossings of $D$ that we encounter precisely once in tracing the curve from
$a$ to $b$ or $c$ (whichever we encounter first) are the crossings that are
interlaced with the two pictured ones; consequently the vertices corresponding
to the two pictured crossings have precisely the same neighbors among the
other vertices. As the two pictured crossings are of opposite types, one
corresponds to a looped vertex and the other corresponds to an unlooped
vertex. These two vertices will be adjacent if we first re-enter the pictured
portion of the diagram at $c$, and nonadjacent if we first re-enter the
pictured portion of the diagram at $b$. The situation is essentially the same
if the mirror-image of the Reidemeister move pictured in the middle of Figure
9 is applied.

\medskip

We now discuss Reidemeister moves of type $\Omega.3$. By a
\textit{Reidemeister triangle\/} in a knot diagram we mean a three-sided
complementary region with the following property: One of the three arcs that
form the sides of the region passes above the other two. Call this arc $a$.
Similarly, there is an arc $b$ that passes below the other two, and an arc $c
$ that is \textquotedblleft centered\textquotedblright\ between the other two.
Let $A$ denote the crossing of $b$ and $c$, $B$ the crossing of $a$ and $c$,
and $C$ the crossing of $a$ and $b$. We say such a triangle is
\textit{positive} (\textit{negative}) if $A$, $B$ and $C$ appear in
counter-clockwise (clockwise) order around the triangle. See Figure~10.

\medskip%

\begin{figure}
[h]
\begin{center}
\includegraphics[
height=1.1364in,
width=3.1548in
]%
{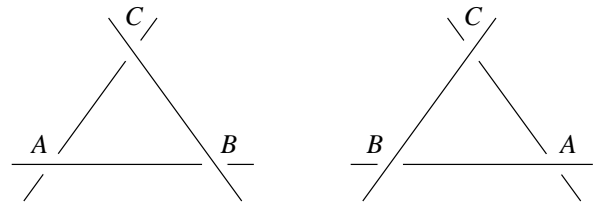}%
\caption{positive and negative Reidemeister triangles}%
\label{Figure 10}%
\end{center}
\end{figure}

\medskip

If we orient the knot so that arc $c$ is oriented from $A$ toward $B$, then in
the knot's Gauss code the labels $A$, $B$ and $C$ must appear in one of the
following eight patterns: $ABACBC$, $ABACCB$, $ABBCAC$, $ABBCCA$, $ABCABC $,
$ABCACB$, $ABCBAC$, $ABCBCA$. Furthermore, the signs of the crossings are then
determined, and hence so is the subgraph of the looped interlacement graph
spanned by $A$, $B$ and $C$. Note that the subgraphs arising from a negative
Reidemeister triangle are simply the loop-toggled versions of the subgraphs
arising from a positive triangle. Also note that any crossing that is not part
of the Reidemeister triangle is interlaced with either none or precisely two
of $A$, $B$ and $C$.

\medskip

Every $\Omega.3$ move can be realized by \textquotedblleft moving arc $c$
through crossing $C$\textquotedblright\ in a Reidemeister triangle. See
Figure~9. The effect of such a move is rather minimal. A new Reidemeister
triangle is created, whose arcs and crossings we label using the scheme
described above. The sign of the new triangle is then the same as that of the
original triangle, and the individual crossing signs of $A$, $B$ and $C$ are
unchanged. All that happens is that in each of the eight \textquotedblleft
words\textquotedblright\ above, the first and second letters are transposed,
as are the third and the fourth, and the fifth and the sixth. The
corresponding effect on the looped interlacement graph of the diagram is
simply to toggle all the non-loop edges in the subgraph spanned by $A$, $B$
and $C$.

\medskip

To complete the proof of Theorem~2, it suffices to verify that the six
configurations depicted in Figure~5 summarize all the cases in the discussion
above. The interested reader can verify the following correspondences between
words and graphical configurations. (An apostrophe following a configuration
indicates the toggling of all the non-loop edges in the depiction of that
configuration in Figure~5.) For a positive Reidemeister triangle:
$ABACBC\leftrightarrow(iii)$, $ABACCB\leftrightarrow(ii^{\prime})$,
$ABBCAC\leftrightarrow(iii^{\prime})$, $ABBCCA\leftrightarrow(v^{\prime})$,
$ABCABC\leftrightarrow(v)$, $ABCACB\leftrightarrow(iii)$,
$ABCBAC\leftrightarrow(ii)$, $ABCBCA\leftrightarrow(iii^{\prime})$. For a
negative Reidemeister triangle: $ABACBC\leftrightarrow(iv)$,
$ABACCB\leftrightarrow(i^{\prime})$, $ABBCAC\leftrightarrow(iv^{\prime})$,
$ABBCCA\leftrightarrow(vi^{\prime})$, $ABCABC\leftrightarrow(vi)$,
$ABCACB\leftrightarrow(iv)$, $ABCBAC\leftrightarrow(i)$,
$ABCBCA\leftrightarrow(iv^{\prime})$.

\section{Proof of Theorem 3}

The definition of the graph Jones polynomial can be ``explained''\ in much the
same way Kauffman explained the definition of the knot Jones polynomial in
\cite{K}. Suppose we seek an evaluation of the graph bracket that is invariant
under the second and third types of Reidemeister moves. Consider $G_{1}$ and
$G_{2}$ with $V(G_{1})=\{u,v,w\}=V(G_{2})$, $E(G_{1})=\{\{u,v\},\{v,v\}\}$ and
$E(G_{2})=\{\{u,w\},\{v,w\},\{v,v\}\}$; they have $[G_{1}]=[G_{2}%
]+(d-1)A(B^{2}+ABd+A^{2})$. If an evaluation in an integral domain is to yield
equal values for $[G_{1}]$ and $[G_{2}]$ then one of $d-1$, $A$,
$B^{2}+ABd+A^{2}$ should evaluate to 0; the only option that could possibly
yield an interesting invariant is $B^{2}+ABd+A^{2}\mapsto0$. Now consider
$H_{1}$ and $H_{2}$ with $V(H_{1})=\emptyset=$ $E(H_{1})$, $V(H_{2})=\{v,w\}$
and $E(H_{2})=\{\{v,v\}\}$. They have $[H_{1}]=1$ and $[H_{2}]=AB+d(B^{2}%
+ABd+A^{2})$; if $B^{2}+ABd+A^{2}$ $\mapsto0$ then we are naturally led to the
evaluations $AB\mapsto1$ and (hence) $d\mapsto-A^{2}-A^{-2}$. Finally, the
factor $(-1)^{n}\cdot t^{(3n-6\ell)/4}$ is introduced to cancel variation
under Reidemeister moves of the first type, and to maintain invariance under
moves of the second and third types. Of course this ``explanation''\ only
rationalizes the definition of the graph Jones polynomial; it does not
actually prove that $V_{G}$ is invariant under graph\ Reidemeister moves.

\medskip

The invariance of $V_{G}$ under $\Omega.1$ moves follows directly from
Proposition 1, because the two one-vertex graphs both have graph Jones
polynomial $1$. The recursive description of the graph Jones polynomial that
results from Theorem 1 is complicated by the fact that the appropriate
coefficients $t^{(3n-6\ell)/4}$ for the Jones polynomials of $G^{a}$, $G^{ab}$
and $(G^{ab})^{a}$ will vary according to the configuration of looped vertices
in $G$. Consequently the rest of the proof of Theorem 3 is focused on the
reduced graph bracket rather than the Jones polynomial itself.

\medskip

\begin{proposition}
Suppose $G$ is obtained from the edgeless graph $H=E_{n}$ by adjoining two
vertices $v$ and $w$ in\ an $\Omega.2$ move. Then $\left\langle G\right\rangle
=\left\langle H\right\rangle =(-A^{3})^{n}$ and $V_{G}=V_{H}=1 $.
\end{proposition}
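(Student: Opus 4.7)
The plan is to reduce the proposition to the single identity $\langle G\rangle=(-A^{3})^{n}$. From Theorem~1(iii) and the reduced substitutions $B=A^{-1}$, $d=-A^{2}-A^{-2}$ one has $Ad+B=-A^{3}$, so $\langle H\rangle=\langle E_{n}\rangle=(-A^{3})^{n}$. Since $G$ has $n+2$ vertices and exactly one more loop than $H$, Definition~6 gives $V_{G}=(-1)^{n+2}t^{(3(n+2)-6)/4}\langle G\rangle(t^{-1/4})=(-1)^{n}t^{3n/4}\langle G\rangle(t^{-1/4})$ and the analogous formula for $V_{H}$; so $\langle G\rangle=\langle H\rangle$ forces $V_{G}=V_{H}$, and a direct evaluation yields $V_{H}=(-1)^{n}t^{3n/4}(-t^{-3/4})^{n}=1$.

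To establish $\langle G\rangle=(-A^{3})^{n}$, I will work directly from Definition~3. Order the vertices so that $v$ is first and $w$ is second, let $\epsilon\in\{0,1\}$ record whether $v\sim w$, and let $\mathbf{s}\in GF(2)^{n}$ be the common indicator vector of the neighborhood of $v$ (equivalently of $w$) in $V(H)$. Since $H=E_{n}$ is edgeless and loopless, each diagonal matrix $\Delta=\mathrm{diag}(\alpha,\beta,\Delta')$ yields
\[
\mathcal{A}(G)+\Delta=\begin{pmatrix}1+\alpha & \epsilon & \mathbf{s}^{T}\\ \epsilon & \beta & \mathbf{s}^{T}\\ \mathbf{s} & \mathbf{s} & \Delta'\end{pmatrix}.
\]
The key feature, imposed precisely by condition (iii) of the $\Omega.2$ move, is that the two copies of $\mathbf{s}$ in the first block-column (and first block-row) are identical. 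The symmetric nullity-preserving operations $R_{1}\leftarrow R_{1}+R_{2}$ and $C_{1}\leftarrow C_{1}+C_{2}$ therefore annihilate those tails, decoupling the first coordinate from the bottom $n\times n$ block.

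Next comes a short case analysis on $(\alpha,\beta)\in\{0,1\}^{2}$. For each fixed $\Delta'$ one finds that two of the resulting nullities equal $\nu(N_{\epsilon})$, where $N_{\epsilon}=\bigl(\begin{smallmatrix}\epsilon & \mathbf{s}^{T}\\ \mathbf{s} & \Delta'\end{smallmatrix}\bigr)$; one equals $1+\nu(N_{\epsilon})$; and one equals $\nu(\Delta')$. Which of the four cases lands in which value depends on $\epsilon$, but the multiset is not. Weighting the four contributions by $A^{2},AB,AB,B^{2}$ respectively, the inner sum simplifies to
\[
d^{\nu(N_{\epsilon})}\bigl(A^{2}+ABd+B^{2}\bigr)+AB\cdot d^{\nu(\Delta')}.
\]
Under the reduced-bracket substitutions, $A^{2}+ABd+B^{2}=A^{2}+(-A^{2}-A^{-2})+A^{-2}=0$ and $AB=1$. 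Only the term $d^{\nu(\Delta')}$ survives, and summing over $\Delta'$ with the remaining weights $A^{\nu(\Delta')}(A^{-1})^{\rho(\Delta')}$ reproduces $\langle E_{n}\rangle=(-A^{3})^{n}$, as required.

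The main obstacle is the case analysis producing those four nullities; this is where the $\Omega.2$ hypothesis is essential, since without the common tail $\mathbf{s}$ the row/column reduction would leave $\Delta'$-dependent coupling. The ensuing cancellation of $A^{2}+ABd+B^{2}$ is no accident: it is precisely the $\Omega.2$-cancellation identity that motivates the reduced-bracket specialization in the heuristic opening of Section~9.
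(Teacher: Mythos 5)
Your proof is correct, but it takes a genuinely different route from the paper's. The paper argues by induction on $n$: it first strips isolated vertices using Proposition 1, so that every vertex of $E_{n}$ may be assumed adjacent to both $v$ and $w$, and then applies the pivot recursion of Theorem 1(ii) at an edge $\{h_{n},w\}$ to reduce to smaller instances (whose pieces are edgeless graphs and a smaller instance of the same configuration). You instead compute directly from the state sum of Definition 3, exploiting the fact that the $\Omega.2$ condition forces the two new rows of $\mathcal{A}(G)+\Delta$ to have identical tails $\mathbf{s}$, so that $R_{1}\leftarrow R_{1}+R_{2}$, $C_{1}\leftarrow C_{1}+C_{2}$ decouples the $v$-coordinate; the four-way case analysis on $(\alpha,\beta)$ then isolates the factor $A^{2}+ABd+B^{2}$, which vanishes under the reduced substitution and leaves exactly $AB\cdot[E_{n}]$. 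I verified the case analysis: the nullities are $\nu(N_{\epsilon})$ for $(\alpha,\beta)\in\{(0,0),(1,1)\}$, while the two mixed cases contribute $1+\nu(N_{\epsilon})$ and $\nu(\Delta')$ in an order depending on $\epsilon$, exactly as you assert, and the normalization factors $(-1)^{n+2}t^{3n/4}$ and $(-1)^{n}t^{3n/4}$ for $G$ and $H$ indeed coincide. Your approach buys two things: it needs no induction and handles an arbitrary common neighborhood $\mathbf{s}$ without first discarding isolated vertices; more significantly, nothing in your computation uses $\mathcal{A}(H)=0$ except in naming the surviving sum $\langle E_{n}\rangle$, so replacing the bottom-right block by $\mathcal{A}(H)+\Delta'$ yields the general $\Omega.2$ invariance (the paper's Proposition 6) in one stroke, where the paper requires a further double induction. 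What the paper's route buys is uniformity with the rest of Section 9, which is written entirely in the language of the recursion of Theorem 1.
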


\begin{proof}
If $n=0$ then $V(G)=\{v,w\}$ and $E(G)$ is $\{\{v,v\}\}$ or
$\{\{v,v\},\{v,w\}\}$. Then $[G]$ is $A^{2}d+AB+ABd^{2}+B^{2}d$ or
$A^{2}+ABd+AB+B^{2}$ (respectively), so $\left\langle G\right\rangle $ is
$-A^{4}-1+1+A^{4}+2+A^{-4}-A^{-4}-1=1$ or $A^{2}-A^{-2}-A^{2}+1+A^{-2}=1$; in
either case $V_{G}=(-1)^{2}\cdot t^{(3(2)-6(1))/4}\cdot\left\langle
G\right\rangle =1$.

Proceeding inductively, suppose $n>0$. If $u$ is an isolated vertex of~$G$
then $V_{G}=V_{G-u}$ and the inductive hypothesis implies $V_{G-u}=1$. Hence
we may as well assume that $G$ has no isolated vertex, i.e., that all $n$
vertices of $H$ are adjacent to both $v$ and $w$. Then all the vertices of
$G^{h_{n}w}-h_{n}-w$ and $(G^{h_{n}w})^{h_{n}}-h_{n}-w$ are isolated. As
$G^{h_{n}w}-h_{n}-w$ has one loop and $(G^{h_{n}w})^{h_{n}}-h_{n}-w$ has no
loops, it follows that $\left\langle G^{h_{n}w}-h_{n}-w\right\rangle
=(-1)^{n}A^{3n-6}$ and $\left\langle G^{h_{n}w}-h_{n}-w\right\rangle
=(-1)^{n}A^{3n}$. In addition, $G^{h_{n}}-h_{n}$ differs from $G-h_{n}$ only
in that the loop at $v$ has been moved to $w$ and the adjacency of $v$ and $w
$ has been toggled, so the inductive hypothesis implies that $G^{h_{n}}-h_{n}
$ has Jones polynomial 1 and $\left\langle G^{h_{n}}-h_{n}\right\rangle
=(-A^{3})^{n-1}$.

Part (ii) of the recursive description of the graph bracket tells us that
\begin{align*}
\left\langle G\right\rangle  & =A^{2}\left\langle G^{h_{n}w}-h_{n}%
-w\right\rangle +\left\langle (G^{h_{n}w})^{h_{n}}-h_{n}-w\right\rangle
+A^{-1}\left\langle G^{h_{n}}-h_{n}\right\rangle \\
& =(-1)^{n}A^{3n-4}+(-1)^{n}A^{3n}+A^{-1}(-A^{3})^{n-1}=(-1)^{n}A^{3n}.
\end{align*}

As $G$ has $n+2$ vertices and one loop, it follows that $V_{G}=1$.
\end{proof}

\begin{proposition}
Suppose $G$ is obtained from $H$ by adjoining two vertices $v$ and $w$ in\ an
$\Omega.2$ move. Then $\left\langle G\right\rangle =\left\langle
H\right\rangle $ and $V_{G}=V_{H}$.
\end{proposition}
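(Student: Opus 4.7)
The plan is to proceed by strong induction on the pair $(|V(H)|,|E(H)|)$ in lexicographic order. The base case $|V(H)|=0$ reduces to Proposition 5 with $n=0$. In the inductive step I would distinguish three situations based on the structure of $H$.

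First, if $H$ has no edges at all, then $H=E_n$ and Proposition 5 applies directly. Second, if $H$ has a loop at some vertex $u$, I would apply Theorem 1(i) at $u$ to both $[G]$ and $[H]$. The key verifications are that $G-\{u,u\}$ is obtained from $H-\{u,u\}$ by \emph{the same} $\Omega.2$ move (removing a loop at $u\in V(H)$ affects nothing about $v$, $w$, their adjacency, or their common neighbors) and that $G^{a}-a$ with $a=u$ is obtained from $H^{u}-u$ by an $\Omega.2$ move in which the roles of the looped and unlooped vertices are \emph{swapped} precisely when $u$ is a common neighbor of $v$ and $w$ in $G$. Both of these graphs have strictly smaller lex rank, so the inductive hypothesis gives equality of their reduced brackets, and linearity of Theorem 1(i) yields $\langle G\rangle=\langle H\rangle$. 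Third, if $H$ is loopless but contains some edge $\{u_1,u_2\}$, these vertices are loopless neighbors in $G$ as well, so Theorem 1(ii) applies; I would verify in the same spirit that each of the three subgraphs $G^{u_1u_2}-u_1-u_2$, $(G^{u_1u_2})^{u_1}-u_1-u_2$, and $G^{u_1}-u_1$ is obtained from its $H$-counterpart by an $\Omega.2$ move (roles possibly swapped in the latter two, according to whether $u_1\in C$ in $G^{u_1u_2}$ and in $G$ respectively). All three have strictly smaller lex rank than $(|V(H)|,|E(H)|)$, and linearity again gives $\langle G\rangle=\langle H\rangle$.

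The main obstacle will be the careful bookkeeping in verifying that local complementation and pivot preserve the $\Omega.2$ structure. The underlying observation that makes everything work is that, because $v$ and $w$ have identical neighborhoods in $G$ outside $\{v,w\}$, they always lie in the same class of vertices with respect to any vertex of $V(H)$: both belong to $N_G(u)$ iff $u\in C$, both belong to $N_G(u_1)\cap N_G(u_2)$ iff $\{u_1,u_2\}\subseteq C$, and so on. Consequently any toggle of an edge $\{v,x\}$ with $x\notin\{v,w\}$ is matched by the same toggle on $\{w,x\}$, so the "same neighbors" property persists. Pivots never touch loops at $v$ or $w$, while local complementation at a common neighbor simultaneously toggles the loops at $v$ and $w$, merely interchanging their roles as the looped and unlooped ends of the $\Omega.2$ configuration. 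The edge $\{v,w\}$ is either left alone or toggled together with the loops, both of which leave the resulting configuration an admissible $\Omega.2$ attachment.

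Finally, once $\langle G\rangle=\langle H\rangle$ has been established, the equality $V_G=V_H$ is immediate: $G$ has $|V(H)|+2$ vertices and $|\mathrm{loops}(H)|+1$ loops, so in the formula $V_G(t)=(-1)^{n}t^{(3n-6\ell)/4}\langle G\rangle(t^{-1/4})$ the sign $(-1)^n$ and the exponent $(3n-6\ell)/4$ are the same whether computed from $G$ or from $H$.
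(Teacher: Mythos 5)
Your argument is correct and follows essentially the same route as the paper's proof: induct (the paper uses the number of vertices and then the number of loops rather than your lexicographic vertex--edge pair, but either measure works), reduce via Theorem 1(i) at a loop of $H$ or Theorem 1(ii) at an edge of $H$, and check that each resulting subgraph is again an $\Omega.2$ extension of the corresponding reduction of $H$. The paper asserts those compatibility checks without detail, so your explicit verification that local complementation at a common neighbor of $v$ and $w$ merely swaps their roles as the looped and unlooped vertices, while pivots leave their loops untouched, fills in exactly what the paper leaves to the reader.
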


\begin{proof}
If $H$ has no edges then Proposition 5 applies. The proof proceeds by
induction on $n=\left|  V(H)\right|  $ and for each value of $n\,$, by
induction on the number of loops in $H$.

Suppose $H$ has a loop at a vertex $a$. Then $G-\{a,a\}$ has fewer loops than
$G$ has, and $G^{a}-a$ has fewer vertices. Moreover, $G-\{a,a\}$ and $G^{a}-a$
are obtained from $H-\{a,a\}$ and $H^{a}-a$ (respectively) by Reidemeister
moves of type $\Omega.2$, so we may inductively assume that $\left\langle
G-\{a,a\}\right\rangle =\left\langle H-\{a,a\}\right\rangle $ and
$\left\langle G^{a}-a\right\rangle =\left\langle H^{a}-a\right\rangle $. Then
$\left\langle G\right\rangle =A^{-2}\left\langle G-\{a,a\}\right\rangle
+(A-A^{-3})\left\langle G^{a}-a\right\rangle =A^{-2}\left\langle
H-\{a,a\}\right\rangle +(A-A^{-3})\left\langle H^{a}-a\right\rangle
=\left\langle H\right\rangle $; as $G$ has two more vertices and one more loop
than $H$, this implies $V_{G}=V_{H}$.

Suppose now that $H$ has no loops and $a,b\in V(H)$ are adjacent. Then
$G^{ab}-a-b,(G^{ab})^{a}-a-b$ and $G^{a}-a$ are obtained from $H^{ab}%
-a-b,(H^{ab})^{a}-a-b$ and $H^{a}-a$ (respectively) by Reidemeister moves of
type $\Omega.2$, so the inductive hypothesis tells us that $\left\langle
G^{ab}-a-b\right\rangle =\left\langle H^{ab}-a-b\right\rangle $, $\left\langle
(G^{ab})^{a}-a-b\right\rangle =\left\langle (H^{ab})^{a}-a-b\right\rangle $
and $\left\langle G^{a}-a\right\rangle =\left\langle H^{a}-a\right\rangle $.
Then $\left\langle G\right\rangle =A^{2}\left\langle G^{ab}-a-b\right\rangle
+\left\langle (G^{ab})^{a}-a-b\right\rangle +A^{-1}\left\langle G^{a}%
-a\right\rangle =A^{2}\left\langle H^{ab}-a-b\right\rangle +\left\langle
(H^{ab})^{a}-a-b\right\rangle +A^{-1}\left\langle H^{a}-a\right\rangle
=\left\langle H\right\rangle $. As $G$ has one loop and $H$ has none, this
implies $V_{G}=V_{H}$.
\end{proof}

\begin{lemma}
Suppose $m\geq1$. Let $\Gamma$ have $V(\Gamma)=\{v_{1},...,v_{m+1}\}$ and
$E(\Gamma)=\{v_{1},v_{2}\}\cup\{\{v_{i},v_{1}\},\{v_{i},v_{2}\}$
$\vert$
$3\leq i\leq m+1\}$, and let $\Gamma^{\prime}$ have $V(\Gamma^{\prime
})=\{v_{1},...,v_{m+2}\}$ and $E(\Gamma^{\prime})=\{\{v_{i},v_{1}%
\},\{v_{i},v_{2}\}$
$\vert$
$3\leq i\leq m+2\}$. Then $V_{\Gamma}=V_{\Gamma^{\prime}}$.
\end{lemma}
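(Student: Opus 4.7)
The plan is to compute both $V_\Gamma$ and $V_{\Gamma'}$ in parallel by applying Theorem 1 twice to each graph, and then to reduce the equality $V_\Gamma=V_{\Gamma'}$ to a single auxiliary identity which itself yields to a further pair of Theorem 1 applications.

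First I would apply part (ii) of Theorem 1 to the edge $\{v_1,v_2\}$ in $\Gamma$. Because each of $v_3,\ldots,v_{m+1}$ is a common neighbor of $v_1$ and $v_2$, the pivot $\Gamma^{v_1v_2}$ is trivial (the pivot condition never fires) and local complementation at $v_1$ toggles every edge and loop among $v_2,v_3,\ldots,v_{m+1}$. Writing $K_r^{\ast}$ for the complete graph on $r$ vertices with a loop at every vertex, the result is
\[
[\Gamma]=A^2[E_{m-1}]+AB[K_{m-1}^{\ast}]+B[L_1\cup K_{m-1}^{\ast}].
\]
A parallel application of part (ii) to the edge $\{v_1,v_3\}$ in $\Gamma'$---in which the pivot deletes each edge $\{v_2,v_j\}$ for $j\geq 4$ and the local complementation at $v_1$ turns $\{v_3,\ldots,v_{m+2}\}$ into a looped clique---yields
\[
[\Gamma']=A^2[E_m]+AB[E_1\cup K_{m-1}^{\ast}]+B[H_m],
\]
where $H_m$ is the graph consisting of $K_m^{\ast}$ together with one extra loopless ``hub'' vertex adjacent to all of its vertices.

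Using Proposition 1 together with $\langle E_n\rangle=(-A^3)^n$, $\langle E_1\rangle=-A^3$, and $\langle L_1\rangle=-A^{-3}$, both expansions simplify. Because $\Gamma'$ has exactly one more vertex than $\Gamma$ and neither graph carries loops, the normalization in the definition of $V_G$ converts $V_\Gamma=V_{\Gamma'}$ into the single condition $A^3\langle\Gamma\rangle+\langle\Gamma'\rangle=0$. Substituting the two expansions, the contributions of the form $(\pm1)A^{3m+2}$ cancel, as do the contributions $\pm A^3\langle K_{m-1}^{\ast}\rangle$, and the whole identity collapses to
\[
\langle H_m\rangle=\langle K_{m-1}^{\ast}\rangle.
\]

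To verify this reduced identity I would apply part (i) of Theorem 1 to the looped vertex $v_3$ in $H_m$. Local complementation at $v_3$ removes every loop on $v_4,\ldots,v_{m+2}$ and every edge among them while placing a loop on the hub, producing $L_1\cup E_{m-1}$; meanwhile $H_m-\{v_3,v_3\}$ is a graph $H_m'$ in which two loopless hubs are joined both to each other and to a common $K_{m-1}^{\ast}$. A final application of part (ii) of Theorem 1 to the edge between the two hubs of $H_m'$---where the pivot is again trivial because every external common neighbor of those hubs is adjacent to both of them---expresses $\langle H_m'\rangle$ in terms of $\langle K_{m-1}^{\ast}\rangle$, $\langle E_{m-1}\rangle$, and $\langle L_1\cup E_{m-1}\rangle$. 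Assembling the two recursions and substituting the evaluations above, every contribution except $\langle K_{m-1}^{\ast}\rangle$ telescopes to zero. The main obstacle throughout is the careful bookkeeping of exactly which loops and non-loop edges are toggled at each pivot and local complementation; once the four local computations are correctly carried out, the final algebraic cancellations proceed mechanically.
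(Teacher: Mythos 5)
Your proposal is correct: I checked the four local computations (the trivial pivot at $\{v_1,v_2\}$ in $\Gamma$, the pivot at $\{v_1,v_3\}$ in $\Gamma'$, and the two further applications of Theorem 1 to $H_m$) and the algebra, and everything goes through, including the reduction of $V_\Gamma=V_{\Gamma'}$ to $A^3\langle\Gamma\rangle+\langle\Gamma'\rangle=0$ and the auxiliary identity $\langle H_m\rangle=\langle K_{m-1}^{\ast}\rangle$ (your $K_r^{\ast}$ is the paper's $K_r+I$). The overall strategy --- expand both graphs by Theorem 1(ii) and cancel against a common unevaluated bracket of a complete looped graph --- is the same as the paper's, but the decomposition is genuinely different. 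The paper pivots on $\{v_1,v_{m+1}\}$ in $\Gamma$ and $\{v_1,v_{m+2}\}$ in $\Gamma'$, i.e., always on an edge joining $v_1$ to a degree-two vertex; this keeps the two computations exactly parallel, each ending (after one further application of part (i)) in $\langle K_{m-1}+I\rangle$ with coefficients that visibly differ by the factor $-A^3$, at the cost of invoking Proposition 5 to evaluate one intermediate bracket. Your asymmetric choice makes the $\Gamma$ expansion terminate immediately but leaves the extra graph $H_m$ on the $\Gamma'$ side, which you dispatch with one more application each of parts (i) and (ii). A modest benefit of your route is that it is self-contained modulo Theorem 1 and Proposition 1 and does not rely on Proposition 5; the paper's version is slightly shorter because the two expansions mirror each other. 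Either way the lemma follows.
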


\begin{proof}
$(\Gamma^{\prime})^{v_{m+2}v_{1}}$ is obtained from $\Gamma^{\prime}$ by
toggling away all the adjacencies between $v_{2}$ and the $v_{i}$, $2<i<m+2$;
hence $(\Gamma^{\prime})^{v_{m+2}v_{1}}-v_{1}-v_{m+2}=E_{m}$ and
$((\Gamma^{\prime})^{v_{m+2}v_{1}})^{v_{m+2}}-v_{1}-v_{m+2}=E_{m-1}\cup L_{1}%
$, where $L_{1}$ consists of a single looped vertex. $(\Gamma^{\prime
})^{v_{m+2}}-v_{m+2}$ is obtained from $\Gamma^{\prime}-v_{m+2}$ by adjoining
an edge $\{v_{1},v_{2}\}$ and also adjoining loops at $v_{1}$ and $v_{2}$.
Hence $((\Gamma^{\prime})^{v_{m+2}}-v_{m+2})-\{v_{1},v_{1}\}$ is obtained from
$E_{m-1}$ by adjoining $v_{1}$ and $v_{2}$ in an $\Omega.2$ move, and
$((\Gamma^{\prime})^{v_{m+2}}-v_{m+2})^{v_{1}}-v_{1}$ is the graph in which
$v_{2}$ is unlooped and isolated and $v_{3},...,v_{m+1}$ are all looped and
all adjacent to each other; that is, $((\Gamma^{\prime})^{v_{m+2}}%
-v_{m+2})^{v_{1}}-v_{1}$ is the disjoint union of an isomorphic copy of
$E_{1}$ and an isomorphic copy of $K_{m-1}+I$, the complete looped graph with
$m-1$ vertices.

We conclude that
\begin{align*}
\left\langle \Gamma^{\prime}\right\rangle  & =A^{2}\left\langle (\Gamma
^{\prime})^{v_{m+2}v_{1}}-v_{1}-v_{m+2}\right\rangle +\left\langle
((\Gamma^{\prime})^{v_{m+2}v_{1}})^{v_{m+2}}-v_{1}-v_{m+2}\right\rangle \\
& +A^{-1}\left\langle (\Gamma^{\prime})^{v_{m+2}}-v_{m+2}\right\rangle \\
& =A^{2}(-A^{3})^{m}+(-A^{3})^{m-1}(-A^{-3})+A^{-3}\left\langle ((\Gamma
^{\prime})^{v_{m+2}}-v_{m+2})-\{v_{1},v_{1}\}\right\rangle \\
& +A^{-1}(A-A^{-3})\left\langle ((\Gamma^{\prime})^{v_{m+2}}-v_{m+2})^{v_{1}%
}-v_{1}\right\rangle \\
& =A^{2}(-A^{3})^{m}+(-A^{3})^{m-1}(-A^{-3})+A^{-3}(-A^{3})^{m-1}\\
& +A^{-1}(A-A^{-3})(-A^{3})\left\langle K_{m-1}+I\right\rangle \\
& =(-1)^{m}A^{3m+2}+A^{-1}(A-A^{-3})(-A^{3})\left\langle K_{m-1}%
+I\right\rangle .
\end{align*}

$\Gamma^{v_{m+1}v_{1}}$ is obtained from $\Gamma$ by toggling away all the
adjacencies between $v_{2}$ and the $v_{i}$, $3\leq i\leq m$; hence
$\Gamma^{v_{m+1}v_{1}}-v_{m+1}-v_{1}=E_{m-1}$ and $(\Gamma^{v_{m+1}v_{1}%
})^{v_{m+1}}-v_{m+1}-v_{1}=E_{m-2}\cup L_{1}$. $\Gamma^{v_{m+1}}-v_{m+1}$ is
obtained from $\Gamma-v_{m+1}$ by removing the edge $\{v_{1},v_{2}\}$ and
adjoining loops at $v_{1}$ and $v_{2}$. Consequently $(\Gamma^{v_{m+1}%
}-v_{m+1})-\{v_{1},v_{1}\}$ is obtained from $E_{m-2}$ by adjoining $v_{1}$
and $v_{2}$ in an $\Omega.2$ move, and $(\Gamma^{v_{m+1}}-v_{m+1})^{v_{1}%
}-v_{1}$ is the graph in which $v_{2},...,v_{m}$ are all looped and all
adjacent to each other; that is, $(\Gamma^{v_{m+1}}-v_{m+1})^{v_{1}}-v_{1}$ is
an isomorphic copy of $K_{m-1}+I$.

We conclude that
\begin{align*}
\left\langle \Gamma\right\rangle  & =A^{2}\left\langle \Gamma^{v_{m+1}v_{1}%
}-v_{1}-v_{m+1}\right\rangle +\left\langle (\Gamma^{v_{m+1}v_{1}})^{v_{m+1}%
}-v_{m+1}-v_{1}\right\rangle \\
& +A^{-1}\left\langle \Gamma^{v_{m+1}}-v_{m+1}\right\rangle \\
& =A^{2}(-A^{3})^{m-1}+(-A^{3})^{m-2}(-A^{-3})+A^{-3}\left\langle
((\Gamma^{v_{m+1}}-v_{m+1})-\{v_{1},v_{1}\}\right\rangle \\
& +A^{-1}(A-A^{-3})\left\langle (\Gamma^{v_{m+1}}-v_{m+1})^{v_{1}}%
-v_{1}\right\rangle \\
& =A^{2}(-A^{3})^{m-1}+(-A^{3})^{m-2}(-A^{-3})+A^{-3}(-A^{3})^{m-2}\\
& +A^{-1}(A-A^{-3})\left\langle K_{m-1}+I\right\rangle \\
& =(-1)^{m-1}A^{3m-1}+A^{-1}(A-A^{-3})\left\langle K_{m-1}+I\right\rangle
\text{,}%
\end{align*}

\noindent and hence $\left\langle \Gamma^{\prime}\right\rangle =-A^{3}%
\cdot\left\langle \Gamma\right\rangle $. As $\Gamma^{\prime}$ has one more
vertex than $\Gamma$ and neither has any loops, this implies $V_{\Gamma
^{\prime}}=V_{\Gamma}$.
\end{proof}

\begin{proposition}
Suppose $G$ has three vertices $u,v,w$ which span a subgraph $H$ isomorphic to
one of those pictured in Figure 5; suppose further that every vertex outside
$H$ is unlooped, has degree 2 and is adjacent to two of $u,v,w$. Let
$G^{\prime}$ be the graph obtained from $G$ by toggling all the non-loop
adjacencies in $H$. Then $\left\langle G\right\rangle =\left\langle G^{\prime
}\right\rangle $ and hence $V_{G}=V_{G^{\prime}}$.
\end{proposition}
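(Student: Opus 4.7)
The plan is to reduce Proposition~7 to a small number of cases via earlier results, then handle each by unfolding Theorem~1's recursion and using Proposition~6 and the preceding Lemma as simplification tools. Invoking Comment~3 of Section~6, the six configurations in Figure~5 fall into two equivalence classes under composition with $\Omega.2$ moves, namely $\{i,iv,v\}$ and $\{ii,iii,vi\}$; since Proposition~6 already establishes the invariance of $\langle\,\cdot\,\rangle$ under $\Omega.2$ moves, it suffices to verify $\langle G\rangle = \langle G'\rangle$ for one representative configuration from each class.

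For each chosen representative, I would parametrize the ``outside'' structure of $G$ by the three sets $X_{uv}, X_{uw}, X_{vw}$ consisting of vertices adjacent to exactly the indicated pair in $\{u,v,w\}$, of respective sizes $p,q,r$. I would then apply the recursion of Theorem~1 at the triangle: part (i) at a looped triangle vertex, or part (ii) at a loopless adjacent pair. Each such step expresses $\langle G\rangle$ as a combination of $\langle\,\cdot\,\rangle$-values of smaller graphs obtained from local complementations and pivots at vertices of $\{u,v,w\}$. The structure of these smaller graphs can be described explicitly in terms of the three $X$-classes together with a residual fan of common neighbors of the two unremoved triangle vertices; using Proposition~6 I can simplify them by repeatedly stripping $\Omega.2$ pairs from the outside classes, and the preceding Lemma then converts a fan of parallel arcs joining two vertices into a direct edge, or vice versa. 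This is precisely the ingredient needed to match the two sides of the proposed identity.

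The main obstacle will be the bookkeeping of how a single pivot or local complementation at a vertex of $\{u,v,w\}$ interacts with the three outside classes: edges among $X_{uv}\cup X_{uw}\cup X_{vw}$ get toggled in predictable but non-trivial patterns, and loops appear on some groups. Once the combinatorics settles, the resulting expressions for $\langle G\rangle$ and $\langle G'\rangle$ should agree case by case, up to the equivalences provided by Proposition~6 and the Lemma. The equality $V_G=V_{G'}$ then follows because an $\Omega.3$ move preserves both the vertex count $n$ and the loop count $\ell$, leaving the normalizing factor $(-1)^n\, t^{(3n-6\ell)/4}$ unchanged.
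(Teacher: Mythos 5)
Your overall architecture --- cut down the number of Figure 5 configurations, then verify representatives by unfolding Theorem 1 at the triangle and simplifying with Proposition 6 and the fan Lemma --- resembles the paper's proof in outline, but there are two genuine gaps. First, the reduction to two representatives via Comment 3 does not work for this proposition as stated. The compositions in Figures 6 and 7 insert auxiliary vertices by $\Omega.2$ moves and then perform the intermediate $\Omega.3$ move on a triple whose outside vertices are no longer all unlooped of degree 2: the displaced original triangle vertex may carry a loop, and the original degree-2 outside vertices acquire new neighbors. So the intermediate move falls outside the hypotheses of this proposition, and your representative-case verification, which explicitly relies on the restricted structure $X_{uv}\cup X_{uw}\cup X_{vw}$ with no other edges or loops, would not cover it; you would need the general form of $\Omega.3$ invariance (the paper's subsequent proposition) for the representatives, which is circular at this stage. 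The paper instead cuts six cases to three using the loop-toggling symmetry $[G+I](A,B,d)=[G](B,A,d)$ of Proposition 2, which stays inside the restricted class.

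Second, the substantive content --- that the recursion applied to $G$ and to $G'$ produces matching terms --- is deferred as ``bookkeeping,'' but that is where the proof lives, and it is not a closed computation. In the paper each retained configuration is handled by induction on $\left\vert V(G)\right\vert$: after pivoting on an edge $\{a,v\}$ with $a$ an outside vertex, the terms $G^{av}-a-v$ and $(G^{av})^{a}-a-v$ literally coincide with their primed counterparts, but the third term requires $\left\langle G^{a}-a\right\rangle =\left\langle (G')^{a}-a\right\rangle$, and this is supplied by the inductive hypothesis applied to a \emph{different} Figure 5 configuration on a smaller graph (configuration \textit{vi} feeds into \textit{iv}, \textit{ii} into \textit{iii}, and \textit{iv} reduces to \textit{ii} by removing a loop). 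Your plan sets up no such induction and offers no mechanism for that third term; stripping $\Omega.2$ pairs and invoking the Lemma disposes only of the degenerate subcase in which two of the three outside classes are empty, which is precisely the role the Lemma plays in the paper. Your closing observation that an $\Omega.3$ move preserves $n$ and $\ell$, so that $\left\langle G\right\rangle =\left\langle G'\right\rangle$ implies $V_{G}=V_{G'}$, is correct and matches the paper.
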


\begin{proof}
In case $G=H$ the result can be verified by direct computation, which we leave
to the reader. (It is interesting to note that even in this simple case, the
proposition fails for the 3-vertex configurations not pictured in Figure 5.)

We proceed inductively, assuming that $G$ has $n\geq4$ vertices. Let
$V(G)-V(H)=N_{uv}\cup N_{uw}\cup N_{vw}$, with the elements of $N_{ij}$
adjacent to $i$ and $j$.

Case 1. Suppose $H$ is isomorphic to the graph appearing in Figure 5
\textit{vi}, with a loop at $u$.

Suppose $a\in N_{uv}$. Then $G^{av}$ is obtained from $G$ by toggling all
adjacencies between $u$ and neighbors of $v$ other than $a$ and $u$; hence
$V(G^{av}-a-v)=\{u,w\}\cup(N_{uv}-\{a\})\cup N_{uw}\cup N_{vw}$ and
$E(G^{av}-a-v)=E(G-a-v)\cup\{\{u,y\}|y\in N_{vw}\}-\{\{u,y\}|y\in
N_{uv}\}-\{u,w\}$. Similarly, $(G^{\prime})^{av}$ is obtained from $G^{\prime
}$ by toggling all adjacencies between $u$ and neighbors of $v$ other than
$a$; hence $V((G^{\prime})^{av}-a-v)=\{u,w\}\cup(N_{uv}-\{a\})\cup N_{uw}\cup
N_{vw}$ and $E((G^{\prime})^{av}-a-v)=E(G^{\prime}-a-v)\cup\{\{u,y\}|y\in
N_{vw}\}-\{\{u,y\}|y\in N_{uv}\}$. We see that $G^{av}-a-v=(G^{\prime}%
)^{av}-a-v$. The only differences between $G^{av}$ and $(G^{av})^{a}$ are that
the latter has a loop at $v$ instead of $u$ and no edge $\{u,v\}$;
consequently $(G^{av})^{a}-a-v$ coincides with $G^{av}-a-v$ except for the
fact that $(G^{av})^{a}-a-v$ has no loop at $u$. Similarly, the only
difference between $(G^{\prime})^{av}-a-v$ and $((G^{\prime})^{av})^{a}-a-v$
is that the latter has no loop at $u$; hence $(G^{av})^{a}-a-v=((G^{\prime
})^{av})^{a}-a-v$.

$G^{a}$ is obtained from $G$ by moving the loop from $u$ to $v$ and removing
the edge $uv$, and $(G^{\prime})^{a}$ is obtained from $G^{\prime}$ by moving
the loop from $u$ to $v$ and adjoining an edge $\{u,v\}$. Consequently the
full subgraph of $G^{a}-a$ with vertices $u,v,w$ is isomorphic to the graph
pictured in Figure 5 \textit{iv}, and $(G^{\prime})^{a}-a$ is obtained from
$G^{a}-a$ by performing a Reidemeister move of type $\Omega.3$ on this
subgraph. The inductive hypothesis tells us that $V_{G^{a}-a}=V_{(G^{\prime
})^{a}-a}$; as $G^{a}-a$ and $(G^{\prime})^{a}-a$ both have one loop and $n-1$
vertices, this implies that $\left\langle G^{a}-a\right\rangle =\left\langle
(G^{\prime})^{a}-a\right\rangle $. We conclude that
\begin{align*}
\left\langle G\right\rangle  & =A^{2}\left\langle G^{av}-a-v\right\rangle
+\left\langle (G^{av})^{a}-a-v\right\rangle +A^{-1}\left\langle G^{a}%
-a\right\rangle \\
& =A^{2}\left\langle (G^{\prime})^{av}-a-v\right\rangle +\left\langle
((G^{\prime})^{av})^{a}-a-v\right\rangle +A^{-1}\left\langle (G^{\prime}%
)^{a}-a\right\rangle =\left\langle G^{\prime}\right\rangle .
\end{align*}

If there is an $a\in N_{uw}$ the same argument applies, with $v$ and $w$
interchanged throughout.

Suppose $N_{uv}=\emptyset=N_{uw}$, so that $V(G)=\{u,v,w\}\cup N_{vw}$; let
$a\in N_{vw}$ and let $m=n-3$. Then $G$ is obtained from the graph denoted
$\Gamma$ in Lemma 1 by adjoining $u$ and $a$ through an $\Omega.2$ move, and
$G^{\prime}$ is obtained from the graph denoted $\Gamma^{\prime}$ in Lemma 1
by adjoining the isolated, looped vertex $u$. It follows that $V_{G}%
=V_{\Gamma}=V_{\Gamma^{\prime}}=V_{G^{\prime}}$.

Case 2. Suppose $H$ is isomorphic to the graph appearing in Figure 5
\textit{ii}, with $v$ adjacent to both $u$ and $w$.

Suppose $a\in N_{uv}$. Then $G^{av}$ is obtained from $G$ by toggling all
adjacencies between $u$ and neighbors of $v$ other than $a$ and $u$; hence
$V(G^{av}-a-v)=\{u,w\}\cup(N_{uv}-\{a\})\cup N_{uw}\cup N_{vw}$ and
$E(G^{av}-a-v)=E(G-a-v)\cup\{\{u,w\}\}\cup\{\{u,y\}|y\in N_{vw}%
\}-\{\{u,y\}|y\in N_{uv}\}$. Similarly, $(G^{\prime})^{av}$ is obtained from
$G^{\prime}$ by toggling all adjacencies between $u$ and neighbors of $v$
other than $a$; hence $V((G^{\prime})^{av}-a-v)=\{u,w\}\cup(N_{uv}-\{a\})\cup
N_{uw}\cup N_{vw}$ and $E((G^{\prime})^{av}-a-v)=E(G^{\prime}-a-v)\cup
\{\{u,y\}|y\in N_{vw}\}-\{\{u,y\}|y\in N_{uv}\}$. We see that $G^{av}%
-a-v=(G^{\prime})^{av}-a-v$. The only differences between $G^{av}$ and
$(G^{av})^{a}$ are that the latter has loops at $u$ and $v$ and no edge
$\{u,v\}$; consequently $(G^{av})^{a}-a-v$ coincides with $G^{av}-a-v$ except
for the fact that $(G^{av})^{a}-a-v$ has a loop at $u$. Similarly, the only
difference between $(G^{\prime})^{av}-a-v$ and $((G^{\prime})^{av})^{a}-a-v$
is that the latter has a loop at $u$; hence $(G^{av})^{a}-a-v=((G^{\prime
})^{av})^{a}-a-v$.

$G^{a}$ is obtained from $G$ by adjoining loops at $u$ and $v$ and removing
the edge $\{u,v\}$, and $(G^{\prime})^{a}$ is obtained from $G^{\prime}$ by
adjoining loops at $u$ and $v$ and also adjoining an edge $\{u,v\}$.
Consequently the full subgraph of $(G^{\prime})^{a}-a$ with vertices $u,v,w$
is isomorphic to the graph pictured in Figure 5 \textit{iii}, and $G^{a}-a$ is
obtained from $(G^{\prime})^{a}-a$ by performing an $\Omega.3$ move on this
subgraph. The inductive hypothesis tells us that $V_{G^{a}-a}=V_{(G^{\prime
})^{a}-a}$; as $G^{a}-a$ and $(G^{\prime})^{a}-a$ both have two loops and
$n-1$ vertices, this implies that $\left\langle G^{a}-a\right\rangle
=\left\langle (G^{\prime})^{a}-a\right\rangle $. We conclude that
\begin{align*}
\left\langle G\right\rangle  & =A^{2}\left\langle G^{av}-a-v\right\rangle
+\left\langle (G^{av})^{a}-a-v\right\rangle +A^{-1}\left\langle G^{a}%
-a\right\rangle \\
& =A^{2}\left\langle (G^{\prime})^{av}-a-v\right\rangle +\left\langle
((G^{\prime})^{av})^{a}-a-v\right\rangle +A^{-1}\left\langle (G^{\prime}%
)^{a}-a\right\rangle =\left\langle G^{\prime}\right\rangle .
\end{align*}

If there is an $a\in N_{vw}$ the same argument applies, with $u$ and $w$
interchanged throughout.

Suppose $N_{uv}=\emptyset=N_{vw}$, so that $V(G)=\{u,v,w\}\cup N_{uw}$; let
$m=n-2$. Then $G$ is the graph denoted $\Gamma^{\prime}$ in Lemma 1, and
$G^{\prime}$ is obtained from the graph denoted $\Gamma$ in Lemma 1 by
adjoining the isolated, unlooped vertex $v$. It follows that $V_{G}%
=V_{\Gamma^{\prime}}=V_{\Gamma}=V_{G^{\prime}}$.

Case 3. Suppose $H$ is isomorphic to the graph appearing in Figure 5
\textit{iv}, with a loop at $u$ and $v$ not adjacent to $u$. Then $G-\{u,u\}$
and $G^{\prime}-\{u,u\}$ are related as in Case 2, so $\left\langle
G-\{u,u\}\right\rangle =\left\langle G^{\prime}-\{u,u\}\right\rangle $.

Observe that $G^{u}-u$ and $(G^{\prime})^{u}-u$ share the same subgraph $S$
spanned by $N_{uv}\cup N_{uw}\cup N_{vw}$: loops and non-loop adjacencies
within $N_{uv}\cup N_{uw}$ are toggled from those of $G$, and loops and
adjacencies involving elements of $N_{vw}$ are the same as those of $G$. In
$G^{u}-u$, all elements of $N_{uv}\cup N_{vw}$ are adjacent to both $v$ and $w
$, and no element of $N_{uw}$ is adjacent to either $v$ or $w$. $G^{u}-u$ has
a loop at $w$ and no loop at $v$, so $G^{u}-u$ is obtained from $S$ by
adjoining $v$ and $w$ in an $\Omega.2$ move. In $(G^{\prime})^{u}-u$, all
elements of $N_{uw}\cup N_{vw}$ are adjacent to both $v$ and $w$, and no
element of $N_{uv}$ is adjacent to either $v$ or $w$. $(G^{\prime})^{u}-u$ has
a loop at $v$ and no loop at $w$, so $(G^{\prime})^{u}-u$ is obtained from $S$
by adjoining $v$ and $w$ in an $\Omega.2$ move. It follows that $\left\langle
G^{u}-u\right\rangle =\left\langle (G^{\prime})^{u}-u\right\rangle $.

We conclude that
\begin{align*}
\left\langle G\right\rangle  & =A^{-2}\left\langle G-\{u,u\}\right\rangle
+(A-A^{-3})\left\langle G^{u}-u\right\rangle \\
& =A^{-2}\left\langle G^{\prime}-\{u,u\}\right\rangle +(A-A^{-3})\left\langle
(G^{\prime})^{u}-u\right\rangle =\left\langle G^{\prime}\right\rangle .
\end{align*}

As the graphs appearing on the left-hand side of Figure 5 result from toggling
the loops in the graphs that appear on the right-hand side, the remaining
three cases follows from the first three and Proposition 2.
\end{proof}

\begin{proposition}
Suppose $G$ has three vertices $u,v,w$ which span a subgraph $H$ isomorphic to
one of those pictured in Figure 5; suppose further that every vertex outside
$H$ that is adjacent to any of $u,v,w$ is adjacent to precisely two of
$u,v,w$. If $G^{\prime}$ is the graph obtained from $G$ by toggling all the
non-loop adjacencies in $H$ then $\left\langle G\right\rangle =\left\langle
G^{\prime}\right\rangle $ and $V_{G}=V_{G^{\prime}}$.
\end{proposition}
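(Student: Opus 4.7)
I would prove Proposition 8 by reducing it to Proposition 7 through induction, using Theorem 1 to strip away the structure on outside vertices that Proposition 7 forbids. Induct on the lexicographic pair $(n, \mu)$, where $n = |V(G)|$ and $\mu$ is the sum of the number of loops on outside vertices and the number of edges with both endpoints outside $\{u,v,w\}$. Three base-type situations close the argument: if $n = 3$ then $G = H$ and the claim is the direct computation invoked at the start of the proof of Proposition 7; if some outside vertex $a$ is isolated in $G$, Proposition 1 factors a constant $\langle \{a\}\rangle$ out of both $\langle G\rangle$ and $\langle G'\rangle$, and the outer hypothesis on $G - a$ (smaller $n$) finishes the case; and if $\mu = 0$ with every outside vertex adjacent to exactly two of $\{u,v,w\}$, Proposition 7 applies.

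For the inductive step, suppose first that an outside vertex $a$ carries a loop. Apply part (i) of Theorem 1 to both $G$ and $G'$. The term $G - \{a,a\}$ retains the same subgraph $H$ on $\{u,v,w\}$ and has smaller $\mu$, and $G' - \{a,a\} = (G - \{a,a\})'$, so the inner hypothesis gives $\langle G - \{a,a\}\rangle = \langle G' - \{a,a\}\rangle$. For the term $G^a - a$, local complementation at $a$ affects the subgraph on $\{u,v,w\}$ only by toggling the one non-loop edge between the pair of $\{u,v,w\}$ to which $a$ is adjacent (nothing if $a$ is adjacent to none) and the two loops at those vertices, and leaves every remaining outside vertex adjacent to $0$ or $2$ of $\{u,v,w\}$ (a parity check on neighborhoods of $a$). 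Comparing with $(G')^a - a$: the two graphs agree off $\{u,v,w\}$, and on $\{u,v,w\}$ they differ by the toggling of all three non-loop edges — the two not touched by the local complementation flip because $G$ and $G'$ already differ on them, and the third flips because the local complementation in $G^a$ and the prior edge-flip in $G'$ combine to a net single toggle. Provided the subgraph $H^*$ of $G^a - a$ on $\{u,v,w\}$ is again one of the six Figure 5 configurations, $(G')^a - a$ is its $\Omega.3$-partner, the outer hypothesis (smaller $n$) gives $\langle G^a - a\rangle = \langle (G')^a - a\rangle$, and the two identities combine to yield $\langle G\rangle = \langle G'\rangle$.

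If no outside vertex has a loop but some edge $\{a,b\}$ has both endpoints outside $\{u,v,w\}$, apply part (ii) of Theorem 1 at the loopless pair $\{a,b\}$ and carry out the analogous analysis on each of the three summand graphs $G^{ab} - a - b$, $(G^{ab})^a - a - b$, and $G^a - a$. In each case one verifies that the hypothesis of Proposition 8 persists (with a possibly new Figure 5 configuration on $\{u,v,w\}$) and that the $G'$-counterpart is related to the $G$-counterpart by the $\Omega.3$ toggling on $\{u,v,w\}$; the outer hypothesis (smaller $n$) closes the case.

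The main obstacle is the case-by-case verification that $H^*$ --- the subgraph on $\{u,v,w\}$ produced by a local complementation or pivot at an outside vertex --- is again one of the six configurations of Figure 5. This depends on which of the six $H$ is and on which pair of $\{u,v,w\}$ the outside vertex (or pair of vertices) attaches to. Using the loop-toggling symmetry of Proposition 2, one may restrict attention to three of the six configurations, cutting the work roughly in half; the remaining verification is a routine but lengthy case enumeration parallel to the one already carried out in the proof of Proposition 7.
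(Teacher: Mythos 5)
Your proposal is correct and follows essentially the same route as the paper's proof: induct on the number of vertices and on the amount of structure (loops and edges) not incident on $\{u,v,w\}$, reduce via Theorem 1 at an outside looped vertex or an outside edge, check by the parity argument that the hypothesis and the Figure 5 configuration persist for each summand, and bottom out at Proposition 7 (with Proposition 1 for isolated vertices). The ``main obstacle'' you flag --- verifying that local complementation or pivoting carries the six configurations of Figure 5 to one another --- is treated the same way in the paper, which asserts it with a single illustrative example rather than a full enumeration.
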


\begin{proof}
If $G$ has no vertices outside $H$ then we appeal to Proposition 7; the proof
proceeds by induction on $\left\vert V(G)\right\vert =n\geq4$.

If $\left|  V(G)\right|  =n$ and every edge of $G$ is incident on $H$ then we
appeal to Proposition 7 (and Proposition 1, if necessary); we proceed by
induction on the number of edges of $G$ not incident on $H$.

Suppose $G$ has a looped vertex $a\not \in V(H)$. The inductive hypothesis
tells us that $\left\langle G-\{a,a\}\right\rangle =\left\langle G^{\prime
}-\{a,a\}\right\rangle $. If $a$ is not adjacent to any of $u,v,w$ then local
complementation at $a$ does not affect any edges incident on $H$, so $G^{a}-a$
and $(G^{\prime})^{a}-a$ are related through a type 3 Reidemeister move
represented by the same part of Figure 5 as $G$ and $G^{\prime}$. Suppose $a$
is adjacent to precisely two of $u,v,w$. If $b\in V(G)-\{u,v,w,a\}$ is not
adjacent to $a$, then local complementation at $a$ does not affect adjacencies
between $b$ and $u,v,w$. If $b\in V(G)-\{u,v,w,a\}$ is adjacent to $a$, then
local complementation at $a$ toggles two of the adjacencies between $b$ and
$u,v,w$. In any case, the fact that $b$ is adjacent to an even number of
$u,v,w$ in $G$ implies that $b $ is also adjacent to an even number of $u,v,w$
in $G^{a}$. Local complementation at $a$ transforms $H$ into another one of
the three-vertex configurations of Figure 5; for instance, if $H$ is
isomorphic to Figure 5 \textit{iii} and $a$ is adjacent to the two looped
vertices then the subgraph of $(G^{\prime})^{a}-a$ spanned by $u,v,w$ is
isomorphic to Figure 5 \textit{ii}. We conclude that $G^{a}-a$ and
$(G^{\prime})^{a}-a$ are related through an $\Omega.3$ move; as both have only
$\left\vert V(G)\right\vert -1$ vertices, we may assume inductively that
$\left\langle G^{a}-a\right\rangle =\left\langle (G^{\prime})^{a}%
-a\right\rangle $ and hence
\begin{align*}
\left\langle G\right\rangle  & =A^{-2}\left\langle G-\{a,a\}\right\rangle
+(A-A^{-3})\left\langle G^{a}-a\right\rangle \\
& =A^{-2}\left\langle G^{\prime}-\{a,a\}\right\rangle +(A-A^{-3})\left\langle
(G^{\prime})^{a}-a\right\rangle =\left\langle G^{\prime}\right\rangle .
\end{align*}

Suppose $G$ has no loops but has an edge $\{a,b\}$ not incident on $H$. As in
the preceding paragraph, $G^{a}-a$ and $(G^{\prime})^{a}-a$ are also related
through a Reidemeister move of type 3 so we may assume inductively that
$\left\langle G^{a}-a\right\rangle =\left\langle (G^{\prime})^{a}%
-a\right\rangle $. If either $a$ or $b$ is adjacent to none of $u,v,w$ then
passing from $G$ to $G^{ab}$ does not affect $H$; the same is true if $a $ and
$b$ are adjacent to the same two of $u,v,w$. Suppose $a$ and $b$ are adjacent
to different pairs of vertices of $H$. Then one of $u,v,w$ is adjacent to $a$
and $b$, one is adjacent to $a$ and not $b$, and one is adjacent to $b$ and
not $a$. If $x\in V(G)-\{a,b,u,v,w\}$ is adjacent to neither $a$ nor $b$ then
passing from $G$ to $G^{ab}$ does not affect any adjacencies between $x$ and
$u,v,w$; if $x$ is adjacent to $a$ or $b$ then passing from $G$ to $G^{ab}$
toggles precisely two adjacencies between $x$ and $u,v,w$ (the two whose
adjacencies with $a,b$ do not match those of $x$). Consequently the fact that
$x$ is adjacent to an even number of $u,v,w$ in $G$ implies that $x$ is also
adjacent to an even number of $u,v,w$ in $G^{ab}$. As all three non-loop
adjacencies involving $u,v,w$ are toggled in passing from $G$ to $G^{ab}$, we
conclude that $(G^{\prime})^{ab}$ and $G^{ab}$ are related through the same
$\Omega.3$ Reidemeister move as $G$ and $G^{\prime}$, respectively. As in the
preceding paragraph, it follows that $(G^{ab})^{a}-a$ and $((G^{\prime}%
)^{ab})^{a}-a$ are also related through $\Omega.3$ moves. We conclude
inductively that
\begin{align*}
\left\langle G\right\rangle  & =A^{2}\left\langle G^{ab}-a-b\right\rangle
+\left\langle (G^{ab})^{a}-a-b\right\rangle +A^{-1}\left\langle G^{a}%
-a\right\rangle \\
& =A^{2}\left\langle (G^{\prime})^{ab}-a-b\right\rangle +\left\langle
((G^{\prime})^{ab})^{a}-a-b\right\rangle +A^{-1}\left\langle (G^{\prime}%
)^{a}-a\right\rangle =\left\langle G^{\prime}\right\rangle .
\end{align*}

\end{proof}

\begin{center}
\textbf{Acknowledgments}
\end{center}

\medskip

We thank the anonymous referee for several comments and corrections that
significantly improved the paper. We are also grateful to Lafayette College
for supporting our work.

\medskip

\end{document}